\documentclass[11pt]{amsart}
\usepackage{amsfonts}
\usepackage{mathrsfs}
\usepackage{mathtools}
\usepackage{leftindex,tensor}
\usepackage{amsmath}
\usepackage{amssymb}
\usepackage{bbm}
\usepackage{latexsym}
\usepackage{lscape}
\usepackage[all,cmtip]{xy}
\usepackage{comment}
\usepackage{appendix}
\usepackage{dsfont}
\usepackage{textcomp}

\usepackage[pagebackref,hyperindex,linktocpage=true]{hyperref}
\usepackage[alphabetic]{amsrefs}

\usepackage{color}

\usepackage[colorinlistoftodos]{todonotes}

\newtheorem{thm}{Theorem}[section]

\newtheorem{prop}[thm]{Proposition}
\newtheorem{lem}[thm]{Lemma}

\newtheorem{lem-def}[thm]{Lemma-Definition}
\newtheorem{cor}[thm]{Corollary}

\theoremstyle{definition}

\newtheorem{rem}[thm]{Remark}
\newtheorem{defn}[thm]{Definition}

\numberwithin{equation}{section}

\newcommand{\into}{\hookrightarrow}

\newcommand{\bbA}{\mathbb{A}}
\newcommand{\bbB}{\mathbb{B}}

\newcommand{\bbG}{\mathbb{G}}

\newcommand{\bbN}{\mathbb{N}}

\newcommand{\bbQ}{\mathbb{Q}}
\newcommand{\bbR}{\mathbb{R}}

\newcommand{\bbT}{\mathbb{T}}

\newcommand{\bbV}{\mathbb{V}}
\newcommand{\bbW}{\mathbb{W}}

\newcommand{\bbZ}{\mathbb{Z}}

\newcommand{\bbg}{\mathbbm{g}}

\newcommand{\bbt}{\mathbbm{t}}

\newcommand{\bfI}{\mathbf{I}}

\newcommand{\bfP}{\mathbf{P}}

\newcommand{\cB}{\mathcal{B}}

\newcommand{\cG}{\mathcal{G}}

\newcommand{\cO}{\mathcal{O}}

\newcommand{\cT}{\mathcal{T}}

\newcommand{\cV}{\mathcal{V}}

\newcommand{\fb}{\mathfrak{b}}

\newcommand{\fg}{\mathfrak{g}}

\newcommand{\fn}{\mathfrak{n}}

\newcommand{\ft}{\mathfrak{t}}

\newcommand{\sA}{\mathsf{A}}
\newcommand{\sB}{\mathsf{B}}

\newcommand{\sG}{\mathsf{G}}

\newcommand{\sP}{\mathsf{P}}

\newcommand{\sT}{\mathsf{T}}

\newcommand{\sV}{\mathsf{V}}

\newcommand{\sX}{\mathsf{X}}

\newcommand{\ga}{\gamma}

\newcommand{\la}{\lambda}
\newcommand{\La}{\Lambda}
\newcommand{\red}{\mathrm{red}}
\newcommand{\reg}{\mathrm{reg}}
\newcommand{\rs}{\mathrm{rs}}

\DeclareMathOperator{\ad}{ad}
\DeclareMathOperator{\Ad}{Ad}

\DeclareMathOperator{\Image}{Im}
\DeclareMathOperator{\Lie}{Lie}
\DeclareMathOperator{\SL}{SL}
\DeclareMathOperator{\SO}{SO}
\DeclareMathOperator{\Spec}{Spec}

\DeclareMathOperator{\val}{val}

\title{Equivalued affine springer fibers in mixed characteristic}
\author{Jingren Chi}
\address{Morningside Center of Mathematics and State Key Laboratory of Mathematical Sciences, Academy of Mathematics and Systems Science, Chinese Academy of Sciences, Beijing 100190, China}
\email{jrenchi@amss.ac.cn}
\date{}

\begin{document}
\begin{abstract}
    We study Witt-vector affine Springer fibers for tame equi-valued conjugacy classes in tamely ramified groups. Similar to the approach of Goresky-Kottwitz-MacPherson in the equal characteristic setting, we show that they admit pavings by perfections of iterated affine space bundles over smooth Hessenberg varieties. Along the way we prove a version of the Chevalley restriction theorem for the dual of Lie algebras.
\end{abstract}
\maketitle

\section{Introduction}
Affine Springer fibers are fundamental geometric objects associated to conjugacy classes in the loop groups. Since the initial work of Kazhdan-Lusztig \cite{KL}, they have been intensively studied by many authors from various perspectives. Their geometric properties turn out to be very complicated in general and a good understanding is still lacking. However, the affine Springer fibers for the \emph{equi-valued} conjugacy classes are much better understood by the work of Goresky, Kottwitz and MacPherson \cite{GKM}. They showed that these special affine Springer fibers admit pavings by iterated affine space bundles over smooth Hessenberg varieties, and deduce that they have pure cohomology.\par   
Our objective in this paper is to prove similar results in the mixed-characteristic setting. In other words, instead of loop groups we deal with reductive groups over $p$-adic fields and the associated Witt-vector affine Springer fibers that are studied in our previous work \cite{Chi-Witt}. Following \cite{GKM}, we prove our main result Theorem \ref{thm:main} in a more general setting of an arbitrary representation of the group and specialize to the case of the adjoint representation on the Lie algebra at the end. Also, we prove the result for general tamely ramified reductive groups (whereas in \cite{GKM} the group is assumed to be split). To check that in the equi-valued elements in the Lie algebras satisfy certain technical condition in the main Theorem \ref{thm:main}, we need a version of the Chevalley restriction theorem for the dual of a Lie algebra, which is very briefly sketched in \cite{GKM}. Since we are unable to locate a more precise reference, we provide a detailed proof with the minimal possible assumptions in \S\ref{sec:dual-chevalley}, see Theorem \ref{thm:dual-Chevalley-restriction}. In \S\ref{sec:BT-MP} we first set up the notations and assumptions on reductive groups over local fields and then review some standard facts from Bruhat-Tits theory and Moy-Prasad theory. After introducing the generalizations of Moy-Prasad filtrations in \S\ref{sec:generalization-MP-filtration}, we prove our main result in \S\ref{sec:GASF} and deduce the consequence Corollary \ref{cor:ASF} on Witt-vector affine Springer fibers for tame equi-valued conjugacy classes. 

\subsection*{Acknowledgment}
This work was partially supported by National Key R\&D Program of China No.2023YFA1009701, 
National Natural Science Foundation of China (Grant No. 12288201, No.12231001), CAS Project for Young Scientists in Basic Research (Grant No.YSBR-033).

\section{The dual Chevalley restriction theorem and application}\label{sec:dual-chevalley}
Our goal in this section is to prove a version of the Chevalley restriction theorem for the dual of a reductive Lie algebra (see Theorem \ref{thm:dual-Chevalley-restriction}) and deduce its consequence on good vectors in the Lie algebra. If one is willing to sacrifice for some generalities by assuming the existence of a a non-degenerate invariant bilinear form on the Lie algebra, it will follow directly from the classical Chevalley restriction theorem. However, since the existence of such an invariant bilinear form is subtle and requires unnecessary assumptions, we will directly prove the dual version by similar strategies for the classical Chevalley restriction theorem, as briefly outlined in \cite{GKM}*{\S5.4}. Among various approaches, we mainly follow  \cite{Jan} and \cite{BC22}. 

\subsection{The setup}\label{subsec:setup}
In this section we let $G$ be a connected reductive group over an algebraically closed field $k$. Let $T\subset G$ be a maximal torus and $B\subset G$ a Borel subgroup containing $T$ with unipotent radical $U\coloneqq[B,B]$. Denote their Lie algebras by $\fg\coloneqq\Lie(G)$, $\ft\coloneqq\Lie(T)$, $\fb\coloneqq\Lie(B)$, $\fn\coloneqq\Lie(U)$. Let $W\coloneqq N_G(T)/T$ be the Weyl group. For each $w\in W$, choose a representative $\dot{w}\in N_G(T)\subset G$. Let $X^*(T)$ be the weight lattice and $X_*(T)$ be the coweight lattice of $T$. Let $\Ad$ (resp. $\Ad^*$) denote the adjoint (resp. co-adjoint) action of $G$ on $\fg$ (resp. the dual $\fg^*$) and let $\ad$ (resp. $\ad^*$) denote the induced action of $\fg$ on $\fg$ (resp. $\fg^*$).\par  
Let $\Phi\subset X^*(T)$ be the set of roots of $T$ in $\fg$. We have a decomposition $\Phi=\Phi^+\sqcup\Phi^-$ where $\Phi^+$ is the set of positive roots (those appearing in $\fb$) and $\Phi^-=-\Phi^+$ is the set of negative roots. Let $\Delta\subset\Phi^+$ be the set of simple roots. For each $\alpha\in\Phi$, let $\alpha^\vee\in X_*(T)$ be the corresponding coroot and let $d\alpha^\vee\colon k\to\ft$ be its differential. We have the root space decomposition \[\fg=\ft\oplus\bigoplus_{\alpha\in\Phi}\fg_\alpha\] 
where $\fg_\alpha=\Lie(U_\alpha)$, with $U_\alpha\subset G$ the corresponding root subgroup. 
For any $\alpha\in\Phi^+$, we can choose a homomorphism $i_\alpha\colon\SL_2\to G$ whose restriction to the upper (resp. lower) triangular unipotent matrices gives isomorphisms $x_\alpha\colon\bbG_a\to U_\alpha$ (resp. $x_{-\alpha}\colon\bbG_a\to U_{-\alpha}$) onto the respective root groups, and whose restriction to the diagonal $\bbG_m$ equals to $\alpha^\vee$. Then we get elements $X_\alpha\coloneqq dx_\alpha(1)\in\fg_\alpha\setminus\{0\}$ for all $\alpha\in\Phi$. By the standard commutation relations (see for example \cite{SGA3IIInew}*{Expos\'e XXII, Lemme 5.4.9}), for any linearly independent roots $\alpha,\beta\in\Phi$ we have
\begin{equation}\label{eq:commutation-alpha-beta}
    \Ad(x_\alpha(t))X_\beta-X_\beta\in\bigoplus_{\substack{i\in\bbZ_{>0}\\ i\alpha+\beta\in\Phi}}\fg_{i\alpha+\beta},\quad\forall t\in k.
\end{equation}
On the other hand, for any $\alpha\in\Phi^+$, using the homomorphism $i_\alpha$ to reduce the calculation to $\SL_2$ case, we obtain
\begin{equation}\label{eq:commutation-alpha-minus-alpha}
    \Ad(x_\alpha(t))X_{-\alpha}=t\cdot d\alpha^\vee(1)+X_{-\alpha}-t^2 X_\alpha,\quad\forall t\in k.
\end{equation}
\subsection{The usual Chevalley restriction theorem}
We say that the group $G$ is \emph{root-smooth} if the morphism $\alpha\colon T\to\bbG_m$ is smooth for any root $\alpha\in\Phi$, see \cite{BC22}*{4.1.1}. This is equivalent to requiring that $d\alpha\ne0$ for each root $\alpha\in\Phi$. This condition is satisfied if $\mathrm{char}(k)\ne2$ or if $G$ has no $\mathrm{Sp}_{2n}$ factor, and it ensures that the set $\fg_{\rs}$ of regular semisimple elements in $\fg$ is nonempty. 
\begin{thm}[Chevalley Restriction]\label{thm:chevalley}
    Restriction along the natural embedding $\ft\into\fg$ induces an injective $k$-algebra homomorphism
    \[k[\fg]^G\into k[\ft]^W\]
    which is bijective if $G$ is root smooth.
\end{thm}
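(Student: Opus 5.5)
Restriction of a $G$-invariant function on $\fg$ to $\ft$ is $N_G(T)$-invariant, and $N_G(T)$ acts on $\ft$ through $W$, so we get a well-defined homomorphism $k[\fg]^G\to k[\ft]^W$. The plan has three parts: injectivity, reduction of surjectivity to a statement about $\fb$, and then the extension itself.

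\emph{Injectivity.} I would show that $\Ad(G)\ft$ is dense in $\fg$; then a $G$-invariant function vanishing on $\ft$ vanishes everywhere. The first idea is a dimension count on the action map $G\times\fb\to\fg$, $(g,X)\mapsto\Ad(g)X$. Since $G/B$ is proper, its image is closed, and by \eqref{eq:commutation-alpha-beta} and \eqref{eq:commutation-alpha-minus-alpha} its differential at $(1,X_0)$ is surjective for suitable $X_0\in\ft$. But this uses root-smoothness, which injectivity is not supposed to need. So I would argue more robustly: every element of $\fb$ has Jordan decomposition $X=X_s+X_n$ with $X_s$ conjugate under $B$ into $\ft$. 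Hence for $f\in k[\fg]^G$ and $X\in\fb$, scaling by $\lambda(s)$ for a regular cocharacter $\lambda$ and letting $s\to 0$ contracts $\fn$. This gives $f(X)=f(\text{image of }X\text{ in }\ft)$. Since every element of $\fg$ lies in a Borel subalgebra, which is conjugate to $\fb$, $f$ is determined by $f|_{\ft}$. This also shows $f|_{\fb}=f|_{\ft}\circ \pi$, where $\pi\colon\fb\to\ft$ is the projection.

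\emph{Reduction of surjectivity.} Assume $G$ is root-smooth and take $p\in k[\ft]^W$. Define $\tilde p=p\circ\pi$ on $\fb$. This is a $B$-invariant function, since $U$ acts trivially on $\fb/\fn\cong\ft$. Using $G\times^B\fb\to\fg$, I obtain a $G$-invariant regular function $F$ on $G\times^B\fb$. The task is then to show that $F$ descends to $\fg$. The map $\mu\colon G\times^B\fb\to\fg$ is proper and surjective, and over $\fg_{\rs}$ (nonempty by root-smoothness) it is finite étale with fibers the $W$-torsor of Borels containing a given Cartan. $W$-invariance of $p$ guarantees that $F$ is constant on these fibers. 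So $F$ descends to a regular $G$-invariant function on $\fg_{\rs}$, i.e.\ a rational function $f$ on $\fg$.

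\emph{Main obstacle: extending $f$.} Since $\fg$ is normal, it suffices to show that $f$ is regular in codimension one, in other words that it is integral over $k[\fg]$. I would do this by observing that $f$ satisfies the monic polynomial $\prod_{w\in W}(T-w\cdot p)$, whose coefficients are $W$-invariants. This is circular, so instead I would use properness of $\mu$: $F$ is a global function on $G\times^B\fb$, hence $\mu_*\cO$ is a finite $\cO_{\fg}$-algebra containing $F$. This makes $f$ integral over $k[\fg]$, and normality gives $f\in k[\fg]$. The $G$-invariance holds on a dense set, hence everywhere, and by construction $f|_{\ft}=p$.

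The delicate point is checking that $\mu$ is generically étale, i.e.\ that the differential is surjective at a regular semisimple point of $\ft$. This is where $d\alpha\ne0$ enters: the tangent map sends $\fg_{-\alpha}$ to $\fg_{-\alpha}$ via $\alpha(X_0)$, which must be nonzero for some $X_0\in\ft$.
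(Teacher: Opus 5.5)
Your argument is correct and takes a genuinely different route from the paper for surjectivity. Note that the paper gives no proof of this statement; it only cites \cite{BC22}*{Theorem 4.1.10}. The natural comparison is therefore with its proof of the dual version, Theorem~\ref{thm:dual-Chevalley-restriction}, which follows the classical template.

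\emph{Injectivity.} Your argument is exactly the paper's template: contract $\fb$ onto $\ft$ by a cocharacter positive on $\Phi^+$, then use $\Ad(G)\fb=\fg$. A few remarks:
\begin{itemize}
\item The Jordan decomposition plays no role; the contraction alone gives $f|_{\fb}=f|_{\ft}\circ\pi$.
\item ``Borel subalgebra'' has to mean the Lie algebra of a Borel subgroup.
\item You only assert the surjectivity of $G\times^B\fb\to\fg$. It holds in every characteristic, by the same argument as Lemma~\ref{lem:GS-alteration-surjective}: via the Bruhat decomposition, the fiber over $\sum_{\alpha\in\Delta}X_\alpha$ is a single point.
\end{itemize}

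\emph{Surjectivity.} Here you genuinely diverge. The paper passes to the locus of finite fibers, extends the $W$-action there by normalization, and proves the analogue of $\widetilde{\fg}^*_0/W\cong\fg^*_0$. It then extends the descended function by Hartogs, using the codimension estimate \eqref{eq:dim-g-0-complement}.

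You instead descend only over $\fg_{\rs}$, obtaining a rational function $f$, and get integrality directly from properness. Since $\Gamma(G\times^B\fb,\cO)$ is a finite $k[\fg]$-module, $F$ satisfies $F^n+\sum_i\mu^*(a_i)F^{n-i}=0$ for some $a_i\in k[\fg]$. Restrict this to $\mu^{-1}(\fg_{\rs})$, where $F=\mu^*f$ and $\mu$ is surjective onto a reduced base. This gives the same monic equation for $f$. You should spell out this transfer, which your text skips. Normality of $k[\fg]$ then finishes.

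Your route bypasses the finite-fiber locus, the quotient identification and the codimension count entirely. It needs only the differential computation at $(1,X_0)$ and the analogue of Lemma~\ref{lem:rs-Springer-fiber} to descend over $\fg_{\rs}$. The paper's route is longer but yields extra structure: the quotient description over the finite-fiber locus, and finite flatness of the Grothendieck--Springer map there.
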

See \cite{BC22}*{Theorem 4.1.10} for a much more general statement, and \emph{loc.cit.} Remark 4.1.11 for the failure of surjectivity without the root-smoothness assumption.

\subsection{Regular semisimple elements in the dual}
Since the Lie algebra $\ft$ is a direct summand of $\fg$, its dual $\ft^*$ is a direct summand of $\fg^*$. We say that an element of $\fg^*$ is \emph{semisimple} if it is in the co-adjoint $G$-orbit of some element in $\ft^*$. An element in $\ft^*$ is \emph{regular} if its stabilizer in $G$ (under the co-adjoint action) equals to $T$. Let $\ft^*_{\reg}\subset\ft^*$ be the subset of regular elements. An element in $\fg^*$ is \emph{regular semisimple} if it lies in the $G$-orbit of some element in $\ft^*_{\reg}$. We let $\fg^*_{\rs}\subset\fg^*$ denote the subset of regular semisimple elements.\par 
For any element $\xi\in\fg^*$, we let $G_\xi$ be its stabilizer in $G$ (for the co-adjoint action) and let 
\[\fg_\xi\coloneqq\{x\in\fg\mid\xi([x,y])=0,\ \forall y\in\fg\}\]
be its infinitesimal stabilizer. 
\begin{lem}
    Let $\xi\in\ft^*$ and let $\Phi_\xi\coloneqq\{\alpha\in\Phi\mid \xi(d\alpha^\vee(1))\ne0\}$. 
    \begin{enumerate}
        \item We have $\fg_\xi=\ft\oplus\bigoplus\limits_{\alpha\in\Phi_\xi}\fg_\alpha$.
        \item The stabilizer $G_\xi$ is a smooth affine algebraic group over $k$ and we have $\Lie(G_\xi)=\fg_\xi$. 
    \end{enumerate}
\end{lem}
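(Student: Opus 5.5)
The plan is to compute $\fg_\xi$ directly from the root space decomposition, and then obtain both smoothness of $G_\xi$ and the equality $\Lie(G_\xi)=\fg_\xi$ from a dimension sandwich. Throughout, $\xi\in\ft^*$ is viewed as an element of $\fg^*$ via the decomposition $\fg=\ft\oplus\bigoplus_\alpha\fg_\alpha$, so $\xi$ vanishes on every root space. The computation below shows that $\fg_\alpha\subset\fg_\xi$ exactly when $\xi(d\alpha^\vee(1))=0$. So for (1) to come out as stated, $\Phi_\xi$ must be read as the set of roots on which this quantity vanishes. I would prove the statement with this reading and flag the sign of the condition in the definition of $\Phi_\xi$ as a probable typo.

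\emph{Step 1: computing $\fg_\xi$.} First, $T$ fixes $\xi$, since $\xi$ is $T$-equivariantly supported on the weight-zero summand $\ft$. Hence $\fg_\xi$ is $T$-stable and is the sum of its intersections with $\ft$ and with the $\fg_\alpha$.
\begin{itemize}
\item For $h\in\ft$ we have $[h,\fg]\subset\bigoplus_\alpha\fg_\alpha$, which $\xi$ kills, so $\ft\subset\fg_\xi$.
\item For $X_\alpha$, note that $[X_\alpha,\ft]\subset\fg_\alpha$, and $[X_\alpha,\fg_\beta]\subset\fg_{\alpha+\beta}$ for $\beta\ne-\alpha$. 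Both are killed by $\xi$.
\item The remaining bracket is $[X_\alpha,X_{-\alpha}]$. Differentiating \eqref{eq:commutation-alpha-minus-alpha} at $t=0$ gives $[X_\alpha,X_{-\alpha}]=d\alpha^\vee(1)$. For negative $\alpha$, apply the same formula with the roles swapped, which changes only a sign.
\end{itemize}
Thus $X_\alpha\in\fg_\xi$ if and only if $\xi(d\alpha^\vee(1))=0$, which gives (1).

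\emph{Step 2: root groups in the stabilizer.} Let $\Psi$ be the set of roots with $\xi(d\alpha^\vee(1))=0$. I would show that $U_\alpha\subset G_\xi$ for $\alpha\in\Psi$ by evaluating $(\Ad^*(x_\alpha(t))\xi)(y)=\xi(\Ad(x_\alpha(-t))y)$ on a basis of $\fg$.
\begin{itemize}
\item For $y=h\in\ft$ we have $\Ad(x_\alpha(s))h=h-s\,d\alpha(h)X_\alpha$, so the value is $\xi(h)$.
\item For $y=X_\beta$ with $\beta\ne\pm\alpha$, \eqref{eq:commutation-alpha-beta} shows the result stays in root spaces, so the value is $0=\xi(X_\beta)$.
\item For $y=X_{\pm\alpha}$, \eqref{eq:commutation-alpha-minus-alpha} (applied via $i_\alpha$, or its analogue for negative roots) gives $\xi$-value $\pm t\,\xi(d\alpha^\vee(1))$, which is $0$ when $\alpha\in\Psi$.
\end{itemize}
Hence $G_\xi$ contains $T$ and all $U_\alpha$ with $\alpha\in\Psi$.

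\emph{Step 3: dimension sandwich.} Let $H\subset G_\xi$ be the subgroup generated by $T$ and the $U_\alpha$ for $\alpha\in\Psi$. As a group generated by smooth connected subgroups, $H$ is smooth and connected. Its Lie algebra contains $\ft$ and each $\fg_\alpha$ with $\alpha\in\Psi$, so $\dim H\ge\dim\fg_\xi$ by Step 1.

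Next, $\Lie(G_\xi)\subset\fg_\xi$. Indeed, $\Lie(G_\xi)$ is the kernel of the differential of the orbit map $g\mapsto\Ad^*(g)\xi$ at $1$, and that differential is $x\mapsto\ad^*(x)\xi=-\xi([x,\cdot])$.

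Combining these gives
\[\dim G_\xi\ge\dim H\ge\dim\fg_\xi\ge\dim\Lie(G_\xi)\ge\dim G_\xi.\]
All inequalities are therefore equalities. Equality of $\dim G_\xi$ with its tangent space dimension at the identity gives smoothness of the affine group scheme $G_\xi$. The inclusion $\Lie(G_\xi)\subset\fg_\xi$ of spaces of equal dimension gives $\Lie(G_\xi)=\fg_\xi$.

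The main point needing care is the scheme-theoretic identification $\Lie(G_\xi)=\ker(x\mapsto\ad^*(x)\xi)$. I would verify it via dual numbers: $1+\epsilon x$ fixes $\xi$ if and only if $\ad^*(x)\xi=0$. I expect this to be the only non-formal step, together with the sign conventions in the $\SL_2$ computation.
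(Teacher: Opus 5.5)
Your proposal is correct and follows the paper's proof. For (1), both you and the paper use $T$-stability of $\fg_\xi$ together with the bracket $[X_\alpha,X_{-\alpha}]=d\alpha^\vee(1)$. For (2), both put $T$ and the relevant root groups inside $G_\xi$ (the paper uses $G_\xi^{\red}$ where you use the generated subgroup $H$) and close a dimension sandwich against $\Lie(G_\xi)\subset\fg_\xi$.

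Your reading of $\Phi_\xi$ as the roots with $\xi(d\alpha^\vee(1))=0$ is the right one. The stated condition ``$\ne0$'' is a typo: the paper's own proof and Corollary~\ref{cor:reg-t-dual-criterion} only make sense with the vanishing condition.
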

\begin{proof}
    (1) It is clear that $\ft\subset\fg_\xi$ and $\fg_\xi$ is a $T$-stable subspace of $\fg^*$. Hence $\fg_\xi$ is a direct sum of $\ft$ and certain root spaces. Then the desired decomposition follows easily. \par
    (2) Let $G_\xi^{\red}$ be the reduced induced subscheme of $G_\xi$. Then it is clear that $T\subset G_\xi^{\red}$. Thanks to \eqref{eq:commutation-alpha-beta}, \eqref{eq:commutation-alpha-minus-alpha} we also have $U_\alpha\subset G_\xi^{\red}$ for all $\alpha\in\Phi_\xi$. Combined with (1) we get that
    \[\dim(\Lie(G_\xi))\ge\dim (G_\xi^{\red})\ge\dim\fg_\xi.\]
    Since there is an obvious inclusion $\Lie(G_\xi)\subset\fg_\xi$ we conclude that $\Lie(G_\xi)=\fg_\xi$ and $G_\xi=G_\xi^{\red}$. So $G_\xi$ is smooth.  
\end{proof}
As an immediate consequence we get:
\begin{cor}\label{cor:reg-t-dual-criterion}
    An element $\eta\in\ft^*$ is regular if and only if $\eta(d\alpha^\vee(1))\ne0$ for all root $\alpha\in\Sigma$, in which case we have $G_\eta^\circ=T$ (the identity component of the stabilizer $G_\eta$). In particular, $\ft^*_{\reg}$ is a $W$-invariant open subset of $\ft^*$ and it is nonempty if and only if $d\alpha^\vee\ne0$ for all $\alpha\in\Sigma$. 
\end{cor}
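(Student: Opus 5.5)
The plan is to deduce everything from the preceding Lemma applied to $\xi=\eta$. (Here $\Sigma$ in the statement is read as the root system $\Phi$.) Note that as written the Lemma defines $\Phi_\xi$ by $\xi(d\alpha^\vee(1))\ne0$; for the proof of part (1) to go through, the root spaces contained in $\fg_\xi$ must be those with $\xi(d\alpha^\vee(1))=0$. Indeed, by \eqref{eq:commutation-alpha-minus-alpha} we have $\xi([X_\alpha,X_{-\alpha}])=\xi(d\alpha^\vee(1))$ up to sign. I will therefore use the Lemma in the corrected form
\[\fg_\xi=\ft\oplus\bigoplus_{\alpha:\ \xi(d\alpha^\vee(1))=0}\fg_\alpha ,\]
together with $\Lie(G_\xi)=\fg_\xi$ and the smoothness of $G_\xi$. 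Checking this sign convention against \eqref{eq:commutation-alpha-minus-alpha} is the only delicate point; the rest is formal.

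\textbf{Characterization of regularity.} Suppose $\eta\in\ft^*$ is regular, so $G_\eta=T$. Then $\fg_\eta=\Lie(G_\eta)=\ft$. By the corrected Lemma, no root $\alpha$ can satisfy $\eta(d\alpha^\vee(1))=0$.

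Conversely, suppose $\eta(d\alpha^\vee(1))\ne0$ for all $\alpha\in\Phi$. Then $\fg_\eta=\ft$, and since $G_\eta$ is smooth with Lie algebra $\fg_\eta$, we get $\dim G_\eta=\dim T$. As $T\subset G_\eta$ is connected of the same dimension, $G_\eta^\circ=T$. Thus the condition is equivalent to $G_\eta^\circ=T$, which is the content of the claim "in which case $G_\eta^\circ=T$."

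If one insists on $G_\eta=T$ exactly, there is an extra step. First, $G_\eta$ normalizes its identity component $T$, so $G_\eta\subset N_G(T)$. Then $\dot w\in G_\eta$ means $w\eta=\eta$, and I would need to argue that this forces $w=1$ under the root condition. I expect this to be the main obstacle, and I would try to sidestep it: the application only needs $G_\eta^\circ=T$, so I would interpret regularity through the identity component in the equivalence.

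\textbf{Remaining assertions.} For each $\alpha$, the map $\eta\mapsto\eta(d\alpha^\vee(1))$ is a linear form on $\ft^*$. Hence $\ft^*_{\reg}$ is the complement of finitely many hyperplanes, or of the whole space when $d\alpha^\vee(1)=0$; in particular it is open.

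$W$-invariance follows because $W$ permutes the coroots: $w(d\alpha^\vee(1))=d(w\alpha)^\vee(1)$. Alternatively, note that $\dot w$ conjugates stabilizers, $G_{w\eta}=\dot wG_\eta\dot w^{-1}$, and this preserves $T$.

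Nonemptiness holds if and only if none of the linear forms vanishes identically, i.e. $d\alpha^\vee\ne0$ for all $\alpha$. The "if" direction uses that $k$ is infinite, so a vector space is never a finite union of proper hyperplanes.
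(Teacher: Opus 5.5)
Your argument is the paper's own: the corollary is stated there as an immediate consequence of the preceding Lemma. You correctly repair its typos: $\Phi_\xi$ must be the set of roots with $\xi(d\alpha^\vee(1))=0$, since $[X_\alpha,X_{-\alpha}]=d\alpha^\vee(1)$ by \eqref{eq:commutation-alpha-minus-alpha}, and $\Sigma$ means $\Phi$. After that, the root condition, $\fg_\eta=\ft$ and $G_\eta^\circ=T$ are equivalent exactly as you say.

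The extra step you hoped to sidestep is actually false, so reading ``regular'' as $G_\eta^\circ=T$ is forced rather than optional. For $\SL_2$ in characteristic $2$ one has $d\alpha^\vee(1)=\mathrm{diag}(1,-1)=1\neq0$, while $W$ acts trivially on $\ft$. Hence every $\eta$ with $\eta(d\alpha^\vee(1))\neq0$ has $G_\eta=N_G(T)\neq T$, and with the literal definition $\ft^*_{\reg}$ would even be empty. Fortunately the later arguments only use $G_\eta^\circ=T$, e.g.\ $U\cap G_\eta\subset U\cap N_G(T)=\{1\}$ in Proposition~\ref{prop:n-perp-rs-isom}.
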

This leads us to the following notion dual to ``root-smoothness".
\begin{defn}
    We say that the reductive group $G$ is \emph{coroot-unramified} if for all root $\alpha\in\Phi$, the corresponding coroot $\alpha^\vee\colon\bbG_m\to T$ is an unramified morphism, or equivalently $d\alpha^\vee\ne0$ for all $\alpha\in\Phi$.
\end{defn}
\begin{rem}
    The group $G$ is coroot-unramified if either $\mathrm{char}(k)\ne2$ or $G$ has no $\SO(2n+1)$ factor. 
\end{rem}

\subsection{The dual Grothendieck-Springer alteration}
Let 
\[\fn^\perp\coloneqq(\fg/\fn)^*\subset\fg^*\] 
be the orthogonal complement of $\fn$ in $\fg^*$. The \emph{Grothendieck-Springer alteration for $\fg^*$} is defined as functor of points by
\[\widetilde{\fg}^*\coloneqq\{(gB,\xi)\in G/B\times\fg^*\mid\Ad^*(g)^{-1}\xi\in\fn^\perp\}\xrightarrow{\pi_{\fg^*}}\fg^*\]
where the map $\pi_{\fg^*}$ sends a pair $(gB,\xi)\in\widetilde{\fg}^*$ to $\xi\in\fg^*$. The fiber of $\pi_{\fg^*}$ over a point $\xi\in\fg^*$ is described by
\[\pi_{\fg^*}^{-1}(\xi)=\{gB\in G/B\mid \Ad^*(g)^{-1}\xi\in\fn^\perp\}.\]

There is a natural $G$-action on $\widetilde{\fg}^*$ such that $\pi_{\fg^*}$ is $G$-equivariant: $h\in G$ acts by $(gB,\xi)\mapsto(hgB,\Ad^*(h)\xi)$. By definition $\widetilde{\fg}^*$ is a closed subscheme of $G/B\times\fg^*$ and since the flag variety $G/B$ is proper, the map $\pi_{\fg^*}$ is proper. We also have a canonical isomorphism
\[\widetilde{\fg}^*\cong G\times^B\fn^\perp\coloneqq (G\times\fn^\perp)/B,\quad (gB,\xi)\mapsto (g,\Ad^*(g)^{-1}\xi)\]
where $B$ acts anti-diagonally. Thus $\widetilde{\fg}^*$ is a vector bundle over $G/B$ whose fibers are isomorphic to $\fn^\perp$ and we have
\[\dim(\widetilde{\fg}^*)=\dim(G/B)+\dim(\fn^\perp)=\dim(\fg^*).\]
\begin{lem}\label{lem:GS-alteration-surjective}
    The map $\pi_{\fg^*}$ is surjective. 
\end{lem}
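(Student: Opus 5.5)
Since $\pi_{\fg^*}$ is proper, its image is closed in $\fg^*$. Moreover, $\fg^*$ is irreducible and $\dim\widetilde{\fg}^*=\dim\fg^*$. It therefore suffices to show that the image contains a nonempty open subset of $\fg^*$. The plan is to show that every element of $\fg^*$ is $G$-conjugate into $\fn^\perp$. This is equivalent to surjectivity, because $\xi$ lies in the image exactly when $\Ad^*(g)^{-1}\xi\in\fn^\perp$ for some $g\in G$.

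The main argument is a fixed-point argument on the flag variety, modeled on the proof that every element of $\fg$ lies in some Borel subalgebra. Fix $\xi\in\fg^*$ and consider the fiber $\pi_{\fg^*}^{-1}(\xi)=\{gB\mid \xi(\Ad(g)\fn)=0\}$, a closed subscheme of $G/B$. It is nonempty as soon as some Borel $B'=gBg^{-1}$ has $\xi|_{\Lie(R_u(B'))}=0$. To find such a Borel, consider the closed subgroup $H=G_\xi$ and its identity component. Even without knowing much about $H$, the orbit map $G\to\fg^*$, $g\mapsto\Ad^*(g)\xi$, lets us conclude by a dimension count, as follows.

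The map $\pi_{\fg^*}$ is $G$-equivariant and its image is closed. Suppose the image were a proper closed subset $Z$. Then $Z$ would be $G$-stable and $\dim Z<\dim\fg^*$. Consequently every fiber over $Z$ would have positive dimension, specifically at least $\dim\widetilde{\fg}^*-\dim Z>0$. To contradict this, I would exhibit a point with a finite fiber. The point of $\widetilde{\fg}^*$ to use is $(B,\eta)$ with $\eta\in\ft^*$ generic, where $\ft^*\subset\fn^\perp$ via the splitting $\fg=\ft\oplus\fn\oplus\fn^-$. The fiber $\pi_{\fg^*}^{-1}(\eta)$ is $T$-stable, and I claim it is finite.

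The claim is the main obstacle, and it is genuinely harder without the coroot-unramified hypothesis. Under that hypothesis, $\ft^*_{\reg}\neq\emptyset$ by Corollary~\ref{cor:reg-t-dual-criterion}. I would then show that for regular $\eta$ the fiber consists of the points $wB$: a $T$-fixed point is $\dot wB$ (and $\dot w^{-1}\eta\in\ft^*\subset\fn^\perp$), and one computes that the tangent space of the fiber at $\dot wB$ vanishes. That tangent space is $\{x\in\fg/\mathrm{Ad}(\dot w)\fb : \eta([x,\Ad(\dot w)\fn])=0\}$, and pairing with the root vectors $X_{-\alpha}$ via \eqref{eq:commutation-alpha-minus-alpha} gives the nonzero values $\eta(d\alpha^\vee(1))$. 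A projective $T$-stable scheme whose $T$-fixed points are isolated with zero tangent space is finite, by Borel's fixed point theorem applied to each component.

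Without the coroot-unramified hypothesis, I would instead use a different argument. Every $\xi$ kills $[\fb',\fb']$-type subspaces for some Borel. Concretely, the restriction $\xi|_{\fb}$ is a linear form on the solvable Lie algebra $\fb$, and I would use $B$-equivariance together with Borel's fixed point theorem for $B$ acting on the projective variety of flags of $\fn$-type subspaces to arrange $\xi|_{\Ad(g)\fn}=0$. This fallback is less certain, so I would prioritize the dimension argument and treat the degenerate characteristic-2, $\SO(2n+1)$ case by reducing to the adjoint or simply connected cover if necessary.
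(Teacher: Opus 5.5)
You prove the lemma only for coroot-unramified $G$. The lemma is stated for arbitrary connected reductive $G$, so your proposal has a genuine gap there.

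\textbf{What works.} When $G$ is coroot-unramified, your argument is correct. For $\eta\in\ft^*_{\reg}$, the Borel fixed point theorem together with your tangent-space computation at the points $\dot wB$ shows that $\pi_{\fg^*}^{-1}(\eta)$ is finite. Your fiber-dimension argument then gives dominance, hence surjectivity.

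\textbf{The gap.} The injectivity half of Theorem~\ref{thm:dual-Chevalley-restriction}, which carries no hypothesis, relies on the lemma in full generality. Suppose $d\alpha^\vee=0$ for some $\alpha\in\Phi^+$, e.g.\ $G=\SO(2n+1)$ in characteristic $2$. Then every $\eta\in\ft^*$ satisfies $\eta(d\alpha^\vee(1))=0$. By \eqref{eq:commutation-alpha-beta} and \eqref{eq:commutation-alpha-minus-alpha}, the root groups $U_{\pm\alpha}$ fix $\eta$, so $\pi_{\fg^*}^{-1}(\eta)\supset U_{-\alpha}B/B\cong\bbA^1$. Thus no test point in $\ft^*$ can have a finite fiber, and the choice of test point itself must change.

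Your fallback does not fill this. Its first sentence restates the lemma. The fixed-point sketch names no solvable group and no projective variety whose fixed points are the required Borels. Passing to the adjoint group does not help, since $\SO(2n+1)$ is already adjoint.

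\textbf{The missing idea.} You need a test point that works uniformly. The paper takes the ``regular nilpotent'' functional $\xi_-=\sum_{\alpha\in\Delta}\xi_{-\alpha}$, dual to the negative simple root vectors.
\begin{itemize}
\item It vanishes on $\fb$, so $B$ lies in its fiber.
\item Suppose $gB$ lies in the fiber, and write $g=u\dot wb$ by the Bruhat decomposition. If $w\neq1$, pick $\alpha\in\Delta$ with $w^{-1}(\alpha)\in\Phi^-$ and set $X=\Ad(\dot wb)^{-1}X_{-\alpha}\in\fn$.
\item Then $\Ad(g)X=\Ad(u)X_{-\alpha}\in X_{-\alpha}+\fb$ by \eqref{eq:commutation-alpha-beta} and \eqref{eq:commutation-alpha-minus-alpha}. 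So $\xi_-(\Ad(g)X)=1\neq0$, a contradiction.
\end{itemize}
Hence $\pi_{\fg^*}^{-1}(\xi_-)=\{B\}$ with no hypothesis on $G$, and your dimension count concludes.

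\textbf{Alternative repair.} Your ``simply connected cover'' remark can also be made into a proof, but it needs an argument you did not give.
\begin{itemize}
\item The group $\tilde G=G_{\mathrm{der}}^{\mathrm{sc}}\times Z(G)^0$ is coroot-unramified, because coroots are primitive in the coroot lattice.
\item The differential of the central isogeny $f\colon\tilde G\to G$ need not be an isomorphism. However, it is equivariant and restricts to an isomorphism $\tilde\fn\to\fn$.
\item So if $(df)^*\xi$ vanishes on $\Ad(\tilde g)\tilde\fn$, then $\xi$ vanishes on $\Ad(f(\tilde g))\fn$, i.e.\ $f(\tilde g)B\in\pi_{\fg^*}^{-1}(\xi)$.
\end{itemize}
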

\begin{proof}
    Extend the collection of root vectors $\{X_\alpha,\alpha\in\Sigma\}$ in \S\ref{subsec:setup} to a basis of $\fg=\ft\oplus\bigoplus\limits_{\alpha\in\Phi}\fg_\alpha$. For each $\alpha\in\Phi$, let $\xi_\alpha^*\in\fg^*$ be the vector in the dual basis corresponding to $X_\alpha$. Consider the element $\xi_-\coloneqq\sum\limits_{\alpha\in\Delta}\xi_{-\alpha}$ where the summation is over the set of simple roots. Then we have $\Ad^*(b)\xi_-\in\fn^\perp$ for all $b\in B$.\par 
    On the other hand, let $g\in G$ be such that $\Ad^*(g)\xi_-\in\fn^\perp$, so we have $\xi_-(\Ad(g)^{-1}(X))=0$ for all $X\in\fn$. Write $g^{-1}=u\dot{w}b$ under the Bruhat decomposition, where $u\in U,b\in B$ and $w\in W$. Suppose that $w\ne1$ and let $\alpha\in\Delta$ be a simple root such that $w^{-1}(\alpha)\in\Phi^-$. Let $X\coloneqq\Ad(\dot{w}b)^{-1}X_{-\alpha}$. Then we have $X\in\fn$ and $\Ad(g)^{-1}X=\Ad(u)X_{-\alpha}$. So $\xi_-(\Ad(u)X_{-\alpha})=0$. But using the equations \eqref{eq:commutation-alpha-beta}, \eqref{eq:commutation-alpha-minus-alpha} and writing $u$ as product of elements in the root groups, we deduce that 
    $\xi_-(\Ad(u)X_{-\alpha})=1$ which is a contradiction. Thus we have $w=1$ and therefore $g\in B$. \par 
    We conclude that the fiber $\pi_{\fg^*}^{-1}(\xi_-)$ consists of a single point. Since $\pi_{\fg^*}$ is proper by upper semi-continuity its fiber has dimension $0$ over a dense open subset of its image. Therefore we have $\dim(\pi_{\fg^*}(\widetilde{\fg}^*))=\dim(\widetilde{\fg}^*)=\dim(\fg^*)$. This implies that $\pi_{\fg^*}$ is dominant, and being proper it must be surjective.
\end{proof}

\begin{lem}\label{lem:rs-Springer-fiber}
    Suppose that $G$ is coroot-unramified. For any $\xi\in\ft^*_{\reg}$ we have 
    \[\pi_{\fg^*}^{-1}(\xi)=\{\dot{w}B\mid w\in W\}.\]
\end{lem}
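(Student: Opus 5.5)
The plan is to prove the two inclusions separately, using the Bruhat decomposition to reduce the nontrivial one to a computation with a single root vector, as in Lemma \ref{lem:GS-alteration-surjective}.

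\textbf{The inclusion $\supseteq$.} Since $\dot w$ normalizes $T$, the coadjoint action of $\dot w^{-1}$ preserves $\ft^*\subset\fg^*$. Here $\ft^*$ is viewed as the set of functionals vanishing on $\bigoplus_{\alpha}\fg_\alpha$, and this set is $N_G(T)$-stable. As $\ft^*\subset\fn^\perp$, it follows that $\Ad^*(\dot w)^{-1}\xi\in\fn^\perp$. Hence every $\dot wB$ lies in the fiber.

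\textbf{The inclusion $\subseteq$.} Let $gB$ be in the fiber, so that $\xi(\Ad(g)X)=0$ for all $X\in\fn$. Write $g=\dot w u$ with $u\in U$, using $G=\bigsqcup_w \dot wB$ and absorbing the torus part into $\dot w$ modulo $B$; more precisely $g\in \dot w U T$ and $T\subset B$. Replacing $\xi$ by $\eta\coloneqq\Ad^*(\dot w)^{-1}\xi$, which again lies in $\ft^*_{\reg}$ because $\ft^*_{\reg}$ is $W$-invariant by Corollary \ref{cor:reg-t-dual-criterion}, the condition becomes $\eta(\Ad(u)X)=0$ for all $X\in\fn$. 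It therefore suffices to show that this forces $u=1$, i.e.\ that $\pi_{\fg^*}^{-1}(\eta)\cap UB/B=\{B\}$.

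\textbf{The main step: $u=1$.} Suppose $u\ne1$. Write $u=\prod_{\beta\in\Phi^+}x_\beta(t_\beta)$ in a fixed order compatible with height, and let $\alpha$ be a root of minimal height with $t_\alpha\neq0$. I would use the opposite unipotent instead: $\Ad(u)$ preserves $\fb$ and $\fn$, so the condition $\eta(\Ad(u)\fn)=0$ is equivalent to $\Ad^*(u)^{-1}\eta$ vanishing on $\fn$. Testing against $\fn$ alone may not detect $t_\alpha$, so the better formulation is to work with $u^{-1}$ and test against $X_{-\alpha}$.

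Concretely, the fiber condition says $\Ad^*(u^{-1})\eta\in\fn^\perp$. Evaluating, we need
\[\eta(\Ad(u)Y)=0\quad\text{for } Y\in\fn.\]
Since $\Ad(u)Y\in\fn$ and $\eta$ kills $\fn$, this is automatic, so this direction does not constrain $u$. Hence I must use the other Bruhat form $g=u\dot w b$, as in the proof of Lemma \ref{lem:GS-alteration-surjective}. Then $\Ad^*(g)^{-1}\xi\in\fn^\perp$ is equivalent to $\Ad^*(u\dot w)^{-1}\xi$ vanishing on $\fn$, since $\Ad(b)$ preserves $\fn$. With $\eta=\Ad^*(\dot w)^{-1}\xi\in\ft^*_{\reg}$ (after relabeling, conjugating $u$ by $\dot w$ gives $u'\in \dot w^{-1}U\dot w$), the condition reads $\eta(\Ad(u')^{-1}X)=0$ for all $X\in\fn$.

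Decompose $u'=u_+u_-$ with $u_\pm$ in the product of root groups $U_\beta$, $\beta\in\Phi^\pm\cap w^{-1}\Phi^+$. Modulo $B$ we may drop $u_+$, so we may assume $u'\in U^-$ and aim to show $u'=1$. Choose $\alpha\in\Phi^+$ such that $-\alpha$ appears in $u'$ with nonzero coefficient $t$ and $\alpha$ has minimal height among such roots. Test the condition on $X=X_\alpha$. By \eqref{eq:commutation-alpha-minus-alpha} (with the roles of $\alpha$ and $-\alpha$ swapped via $i_\alpha$) and \eqref{eq:commutation-alpha-beta}, the $\ft$-component of $\Ad(u')^{-1}X_\alpha$ equals $\pm t\,d\alpha^\vee(1)$. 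The contributions from the other factors land in root spaces, or in $\ft$ only through roots of smaller height, which have zero coefficient by minimality. Hence
\[\eta(\Ad(u')^{-1}X_\alpha)=\pm t\,\eta(d\alpha^\vee(1))\neq0,\]
using Corollary \ref{cor:reg-t-dual-criterion}; this is where coroot-unramifiedness enters, since it guarantees $d\alpha^\vee(1)$ is available and $\eta(d\alpha^\vee(1))\ne0$. This contradiction forces $u'=1$, so $gB=\dot wB$.

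\textbf{Expected obstacle.} The delicate part is the bookkeeping in the last step: showing that no other factor of $u'$ contributes to the $\ft$-component of $\Ad(u')^{-1}X_\alpha$. I would handle it by ordering the product so that the $U_{-\alpha}$ factor acts last on $X_\alpha$, and by using \eqref{eq:commutation-alpha-beta} to check that the earlier factors move $X_\alpha$ only into root spaces $\fg_\gamma$ with $\gamma\ne\pm\alpha$ of height less than that of $\alpha$. A later $U_{-\delta}$ can send such a $\fg_\gamma$ into $\ft$ only when $\gamma=\delta$, a root of smaller height, whose coefficient is zero by minimality. An alternative route is a dimension/torus-fixed-point argument: the fiber is $T$-stable and its $T$-fixed points are exactly the $\dot wB$, so it would suffice to show the fiber is finite. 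The direct computation seems cleaner.
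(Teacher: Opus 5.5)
Your final argument is correct and is essentially the paper's proof. It uses the Bruhat decomposition $g=u\dot w b$, evaluates the fiber condition on a root vector in $\fn$ that $\dot w$ carries into a negative root space, and extracts a $\ft$-component $\pm t\,d\alpha^\vee(1)$, which regularity forbids $\eta$ from killing. You only conjugate by $\dot w$ first. Your choice of $\alpha$ of minimal height supplies the bookkeeping that the paper's one-line assertion about the $\ft$-component leaves implicit: for non-minimal $\alpha$, other coordinates of $u$ do contribute to $\ft$, so some such induction is genuinely needed.

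Do delete the opening reduction, which rests on the false decomposition $G=\bigsqcup_w \dot wB$; the correct one is $G=\bigsqcup_w U\dot wB$, which you eventually use. For $g\in\dot wB$ there is nothing to prove, which is why that route seemed not to constrain $u$.
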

\begin{proof}
    The containment ``$\supset$" follows by definition and it remains to show the reverse inclusion. Take any $gB\in\pi_{\fg^*}^{-1}(\xi)$ and we need to show that $gB=\dot{w}B$ for some $w\in W$. Let $B^-\subset G$ be the Borel subgroup opposite to $B$ and let $U^-\subset B^-$ be its unipotent radical with Lie algebra $\fn^-\coloneqq\Lie(U^-)$. By the Bruhat decomposition we may assume that $g=uw$ for some $w\in W$ and $u\in U$ such that $\dot{w}^{-1}u\dot{w}\in U^-$. Consider the set
    \[\Phi_w^+\coloneqq\{\alpha\in\Phi^+\mid w^{-1}(\alpha)\in\Phi^-\}.\]
    Then we can write $u=\prod_{\alpha\in\Phi_w^+}x_\alpha(u_\alpha)$ where we have chosen an order on the set $\Phi_w^+$ to form the product. For any $\alpha\in\Phi_w^+$, we have $\Ad(\dot{w})(X_{-w^{-1}(\alpha)})=cX_{-\alpha}$ for some $c\in k^\times$ and from the identities \eqref{eq:commutation-alpha-beta} and \eqref{eq:commutation-alpha-minus-alpha} we deduce that
    \[\Ad(g)(X_{-w^{-1}(\alpha)})=\Ad(u)(\Ad(\dot{w})(X_{-w^{-1}(\alpha)}))\in cu_\alpha\cdot d\alpha^\vee(1)+\fn\oplus\fn^-.\]
    By assumption we have $\Ad^*(g)^{-1}(\xi)\in\fn^\perp$ and so $\xi(\Ad(g)(X_{-w^{-1}(\alpha)}))=0$ for all $\alpha\in\Phi_w^+$. Since $\xi\in\ft^*$, it vanishes on any element in $\fn\oplus\fn^-$ and thus the identity above implies that $u_\alpha\cdot\xi(d\alpha^\vee(1))=0$. On the other hand, since $\xi$ is regular we have $\xi(d\alpha^\vee(1))\ne0$ and therefore $u_\alpha=0$ for all $\alpha\in\Phi_w^+$. This shows that $u=1$ and $g=\dot{w}$, so the proof is completed.
\end{proof}

\begin{prop}\label{prop:n-perp-rs-isom}
    Suppose that $G$ is coroot-unramified. Let 
    \[\fn^\perp_{\rs}\coloneqq\fn^\perp\cap\fg^*_{\rs}\] 
    and let $q\colon\fn^\perp\to\ft^*$ denote the natural projection which is dual to the embedding $\ft\cong\fb/\fn\into\fg/\fn$. Then we have $\fn^\perp_{\rs}=q^{-1}(\ft^*_{\reg})$.\par 
    Moreover, the coadjoint action induces an isomorphism
    \[U\times\ft^*_{\reg}\xrightarrow{\sim}\fn^\perp_{\rs},\quad (u,\eta)\mapsto\Ad^*(u)\eta\]
    where $(\ad^*(X)\xi)(Y)\coloneqq-\xi([X,Y])$ for all $Y\in\fg$. 
\end{prop}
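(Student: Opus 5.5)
Two things have to be proved: the set-theoretic identity $\fn^\perp_{\rs}=q^{-1}(\ft^*_{\reg})$, and the fact that $\phi\colon U\times\ft^*_{\reg}\to\fn^\perp_{\rs}$, $(u,\eta)\mapsto\Ad^*(u)\eta$, is an isomorphism of varieties. The plan is to first show $\phi$ is a bijection onto $q^{-1}(\ft^*_{\reg})$, which gives the identity, and then upgrade the bijection to an isomorphism.

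\emph{Step 1: the image of $\phi$.} For $u\in U$ and $\eta\in\ft^*$, the element $\Ad^*(u)\eta$ lies in $\fn^\perp$ because $\Ad(u)^{-1}\fn=\fn$. Moreover $q(\Ad^*(u)\eta)=\eta$: for $H\in\ft$ we have $\Ad(u)^{-1}H\in H+\fn$ by \eqref{eq:commutation-alpha-beta}, and $\eta$ kills $\fn$. Hence $\phi$ is injective in the $\eta$-coordinate and $\phi(U\times\ft^*_{\reg})\subset q^{-1}(\ft^*_{\reg})\cap\fg^*_{\rs}$.

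\emph{Step 2: $q^{-1}(\ft^*_{\reg})$ is contained in the image of $\phi$.} I would do this by a unipotent "Kostant-type" induction. Filter $\fn^\perp\cong(\fg/\fn)^*$ using the duals of the root spaces $\fg_{-\alpha}$ for $\alpha\in\Phi^+$, ordered by height. Given $\xi\in\fn^\perp$ with $q(\xi)=\eta$ regular, we kill the $\fg_{-\alpha}^*$-components one height at a time. For this, use \eqref{eq:commutation-alpha-minus-alpha}: the element $\Ad^*(x_\alpha(t))$ changes the $\fg_{-\alpha}$-coordinate of $\xi$ by $\pm t\,\eta(d\alpha^\vee(1))$, modulo coordinates of lower height, and \eqref{eq:commutation-alpha-beta} shows that it leaves the coordinates of greater or equal height otherwise unchanged. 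Since $\eta(d\alpha^\vee(1))\neq0$ by Corollary \ref{cor:reg-t-dual-criterion}, we can solve for $t$. Proceeding from the top height downward (or the reverse, depending on the sign conventions) produces $u\in U$ with $\Ad^*(u)^{-1}\xi=\eta$.

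The solution $t$ at each step is unique and is a polynomial in the coordinates of $\xi$ divided by products of the $\eta(d\alpha^\vee(1))$. This shows that $\phi$ is bijective onto $q^{-1}(\ft^*_{\reg})$ and that the inverse map is a morphism, given by regular functions on the open set $q^{-1}(\ft^*_{\reg})$. So $\phi$ is an isomorphism onto $q^{-1}(\ft^*_{\reg})$.

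\emph{Step 3: $\fn^\perp_{\rs}\subset q^{-1}(\ft^*_{\reg})$.} Let $\xi=\Ad^*(g)\eta'\in\fn^\perp$ with $\eta'\in\ft^*_{\reg}$. Then $g^{-1}B\in\pi_{\fg^*}^{-1}(\eta')$, so by Lemma \ref{lem:rs-Springer-fiber} we have $g^{-1}\in\dot wB$. Hence $g=b\dot w^{-1}$ for some $b\in B=TU$, and $\xi=\Ad^*(u)\Ad^*(t)\,w^{-1}\eta'$. Since $T$ acts trivially on $\ft^*$, this equals $\Ad^*(u)(w^{-1}\eta')$. As $\ft^*_{\reg}$ is $W$-stable, we get $q(\xi)=w^{-1}\eta'\in\ft^*_{\reg}$. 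Together with Step 1 this gives $\fn^\perp_{\rs}=q^{-1}(\ft^*_{\reg})$.

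The main obstacle is the triangular bookkeeping in Step 2. One has to check that conjugating by $x_\alpha(t)$ does not disturb the coordinates already normalized, which is where \eqref{eq:commutation-alpha-beta} and the height ordering are used. One also has to confirm that the inverse is genuinely algebraic, not merely a bijection. The latter is needed because, in characteristic $p$, a bijective morphism need not be an isomorphism, so producing the explicit polynomial formulas for $t$ above is essential.
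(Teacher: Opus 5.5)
Your proof is correct in substance, but it obtains the isomorphism by a different route from the paper's. Your Step 3 is the paper's own argument for $\fn^\perp_{\rs}\subset q^{-1}(\ft^*_{\reg})$: Lemma \ref{lem:rs-Springer-fiber} plus the fact that $B$ preserves the fibres of $q$.

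For the remaining claims the paper never writes down an inverse. It proceeds as follows:
\begin{itemize}
\item it computes $d\varphi_{(1,\xi)}$ to see that $\varphi$ is \'etale;
\item it uses that $U$ meets the stabilizers $G_\eta$ trivially (Corollary \ref{cor:reg-t-dual-criterion}) to get an open embedding;
\item it invokes closedness of orbits of unipotent groups on affine varieties to see that each fibre map $U\to q^{-1}(\eta)$ is an isomorphism;
\item it concludes with the fibrewise criterion of EGA~IV.
\end{itemize}
Surjectivity onto $q^{-1}(\ft^*_{\reg})$, and hence the equality of sets, falls out at the end. Your Kostant-style triangular elimination replaces these general theorems with explicit formulas for $u$, whose denominators are products of the $\eta(d\alpha^\vee(1))$. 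This is more elementary and settles the separability issue you raise directly. The paper's route, in exchange, avoids all bookkeeping with structure constants and orderings of root groups.

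That bookkeeping is, however, stated backwards in your proposal. Take $\beta\in\Phi^+\setminus\{\alpha\}$. By \eqref{eq:commutation-alpha-beta} and $\xi|_{\fn}=0$, the action of $\Ad^*(x_\alpha(t))$ changes the $\fg_{-\beta}$-coordinate only through the coordinates at roots $\beta-i\alpha\in\Phi^+$ with $i>0$, and this forces $\mathrm{ht}(\beta)>\mathrm{ht}(\alpha)$. By \eqref{eq:commutation-alpha-minus-alpha}, the $\fg_{-\alpha}$-coordinate changes by exactly $\pm t\,\eta(d\alpha^\vee(1))$, with no correction terms. So apart from the $\alpha$-coordinate, the coordinates of height at most $\mathrm{ht}(\alpha)$ are untouched and only the higher ones move. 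You must therefore eliminate in order of \emph{increasing} height; the top-down order would destroy coordinates you had already normalised.

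With that order fixed, uniqueness of each $t$ gives injectivity of $\phi$ once you also use that every $u\in U$ is uniquely an ordered product $\prod_{\alpha\in\Phi^+}x_\alpha(t_\alpha)$ in the chosen order. Alternatively, simply use $U\cap G_\eta=1$, as the paper does. A minor point: $\Ad(u)^{-1}H\in H+\fn$ for $H\in\ft$ is standard, but it is not literally \eqref{eq:commutation-alpha-beta}, which concerns root vectors only.
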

\begin{proof}
    From the well-known fact that the adjoint action of $B$ preserves the fibers of the projection $\fb\to\fb/\fn\cong\ft$, we deduce that the coadjoint action of $B$ on $\fn^\perp$ preserves the fibers of $q$.\par 
    For any element $\eta\in\fn^\perp_{\rs}$, by definition there exists $g\in G$ such that $\xi\coloneqq\Ad^*(g)\eta\in\ft^*_{\reg}$. Then we have $gB\in\pi_{\fg^*}^{-1}(\xi)$ and by Lemma \ref{lem:rs-Springer-fiber} there exists $w\in W$ and $b\in B$ such that $g=\dot{w}b$. Then using that the fibers of $q$ are preserved by $B$ we deduce that 
    \[\eta=\Ad^*(g)^{-1}(\xi)=\Ad^*(b)^{-1}\Ad^*(\dot{w})^{-1}(\xi)\in q^{-1}(\Ad^*(\dot{w})^{-1}(\xi))\subset q^{-1}(\ft^*_{\reg})\]
    where the last inclusion follows from the $W$-invariance of $\ft^*_{\reg}$ in Lemma \ref{lem:rs-Springer-fiber}. This proves the inclusion $\fn^\perp_{\rs}\subset q^{-1}(\ft^*_{\reg})$. Composing this inclusion with the coadjoint action map we get a morphism
    \[\varphi\colon U\times\ft^*_{\reg}\to q^{-1}(\ft^*_{\reg}).\]
    Its differential at an element $(1,\xi)$, where $\xi\in\ft^*_{\reg}$, is given by
    \[d\varphi_{(1,\xi)}\colon\fn\oplus\ft^*\to\fn^\perp,\quad(X,\eta)\mapsto\eta+\ad^*(X)\xi.\]
    One verifies easily that $d\varphi_{(1,\xi)}$ is injective and hence bijective since the two sides have the same dimension. Thus $\varphi$ is \'etale. From the definition we see that $\varphi$ commutes with the natural projections to $\ft^*_{\reg}$ on both sides. By Corollary \ref{cor:reg-t-dual-criterion} $U$ intersects trivially with the stabilizers of elements in $\ft^*_{\reg}$, so the map $\varphi$ is injective and hence an open embedding. On the other hand, by \cite{Borel-LAG}*{Chapter I, Proposition 4.10}, the restriction of $\varphi$ to each fiber over $\ft^*_{\reg}$ is a closed embedding and hence must be an isomorphism. Then we deduce that $\varphi$ is an isomorphism by the fiberwise criterion \cite{EGAIV4}*{Corollaire 17.9.5}. This also shows the equality $\fn^\perp_{\rs}=q^{-1}(\ft^*_{\reg})$ and conclude the proof.
\end{proof}

Let $\widetilde{\fg}^*_{\rs}\coloneqq\pi_{\fg^*}^{-1}(\fg^*_\rs)$ and let $\pi_{\fg^*_{\rs}}\colon \widetilde{\fg}^*_{\rs}\to\fg^*_{\rs}$ be the base change of $\pi_{\fg^*}$. We consider the following commutative diagram in which all maps are $G$-equivariant:
\[\xymatrix@C=2cm{
& G/B & \\
G/T\times\ft^*_{\reg}\ar[ur]^{p_1}\ar[rr]^{f\colon (gT,\eta)\mapsto(gB,\Ad^*(g)\eta)}_{\sim}\ar[rd]_{\pi'\colon(gT,\eta)\mapsto\Ad^*(g)\eta} & & \widetilde{\fg}^*_{\rs}\ar[ul]_{p_2}\ar[ld]^{\pi_{\fg^*_{\rs}}}\\
&\fg^*_{\rs}&
}\]
where $G$ acts on $G/T\times\ft^*_{\reg}$ by left multiplication on $G/T$ (and acting trivially on $\ft^*_{\reg}$) and the maps $p_1,p_2$ are the natural projections. \par

\begin{lem}\label{lem:rs-W-torsor}
    Suppose that $G$ is coroot-unramified. In the diagram above, the middle horizontal map $f$ is an isomorphism. Moreover, if we equip  $\widetilde{\fg}^*_{\rs}$ with the $W$-action induced via $f$ from the $W$-action on $G/T\times\ft^*_{\reg}$ given by the rule $(gT,\xi)\mapsto (g\dot{w}^{-1}T,\Ad^*(\dot{w})\xi)$, then the maps $\pi'$ and $\pi_{\fg^*_{\rs}}$ are finite \'etale $W$-torsors. Finally $\fg^*_{\rs}$ is a dense open subset of $\fg^*$. 
\end{lem}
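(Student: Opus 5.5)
We first check that $f$ is well defined: for $(gT,\eta)\in G/T\times\ft^*_{\reg}$, the element $\Ad^*(g)\eta$ is regular semisimple by definition, and $\Ad^*(g)^{-1}\Ad^*(g)\eta=\eta\in\ft^*\subset\fn^\perp$. Hence $(gB,\Ad^*(g)\eta)\in\widetilde{\fg}^*_{\rs}$. Changing $g$ to $gt$ with $t\in T$ does not change $\Ad^*(g)\eta$, because $T$ fixes $\ft^*$.

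To prove that $f$ is an isomorphism we use the identification $\widetilde{\fg}^*\cong G\times^B\fn^\perp$. Under it, $\widetilde{\fg}^*_{\rs}$ corresponds to $G\times^B\fn^\perp_{\rs}$, since being regular semisimple is a $G$-invariant condition. By Proposition \ref{prop:n-perp-rs-isom}, $\fn^\perp_{\rs}\cong U\times\ft^*_{\reg}$, and this isomorphism is $B$-equivariant. Here $B=U\rtimes T$ acts on the right-hand side by $u't\cdot(u,\eta)=(u'\,tut^{-1},\eta)$, which uses that $T$ fixes $\ft^*$. So $\fn^\perp_{\rs}\cong B\times^T\ft^*_{\reg}$ with $T$ acting trivially on $\ft^*_{\reg}$. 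We then get
\[G\times^B\fn^\perp_{\rs}\cong G\times^B(B\times^T\ft^*_{\reg})\cong G\times^T\ft^*_{\reg}=G/T\times\ft^*_{\reg}.\]
Unwinding the identifications shows that the inverse of this chain is exactly $f$. The main technical point is the $B$-equivariance: both the bijectivity of $f$ and its being an isomorphism of schemes rest on Proposition \ref{prop:n-perp-rs-isom}, since it is an isomorphism rather than a bijection on points.

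For the torsor statement, we first check that the rule $(gT,\xi)\mapsto(g\dot w^{-1}T,\Ad^*(\dot w)\xi)$ is a free action of $W$ commuting with the $G$-action, and that $\pi'$ is invariant under it. Since $W=N_G(T)/T$ acts freely on $G/T$ from the right, the action is free. To show that $\pi'$ is a $W$-torsor, we show that the map
\[W\times G/T\times\ft^*_{\reg}\to(G/T\times\ft^*_{\reg})\times_{\fg^*_{\rs}}(G/T\times\ft^*_{\reg})\]
is an isomorphism, together with the fact that $\pi'$ is flat and surjective. Surjectivity holds by the definition of $\fg^*_{\rs}$. On points, if $\Ad^*(g)\eta=\Ad^*(g')\eta'$, then Lemma \ref{lem:rs-Springer-fiber}, applied via $f$, gives $g^{-1}g'B=\dot wB$, and Corollary \ref{cor:reg-t-dual-criterion} ($G^\circ_\eta=T$, so the stabilizer normalizes $T$) pins down the element of $W$.

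For the scheme-theoretic statements we argue as follows.
\begin{enumerate}
\item The map $\pi'$ is étale. Its differential at $(1,\eta)$ is $(X,\eta')\mapsto\eta'+\ad^*(X)\eta$ on $\fg/\ft\oplus\ft^*$. This is injective because $\fg_\eta=\ft$, and it is bijective by a dimension count; this is the same argument as in Proposition \ref{prop:n-perp-rs-isom}. By $G$-equivariance the differential is then bijective everywhere.
\item The fibres of $\pi'$ are reduced $W$-orbits: by (1) they are étale, and by the point count above they are single $W$-orbits.
\item The map $\pi'$ is finite. Via $f$, $\pi'$ is identified with $\pi_{\fg^*}$ restricted over $\fg^*_{\rs}$, which is proper because it is a base change of the proper map $\pi_{\fg^*}$. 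A proper map with finite fibres is finite.
\end{enumerate}
A finite étale surjective map whose geometric fibres are $W$-orbits for a free $W$-action is a $W$-torsor, so $\pi'$ is a finite étale $W$-torsor. The statement for $\pi_{\fg^*_{\rs}}$ follows by transport through $f$.

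Openness of $\fg^*_{\rs}$ needs separate care, since $\pi'$ is étale only once we know its target is a reasonable subscheme. The plan is to set $V\subset\widetilde{\fg}^*$ to be the open locus where $\pi_{\fg^*}$ is étale. This locus contains the open set $G\times^B q^{-1}(\ft^*_{\reg})$, which is open by Proposition \ref{prop:n-perp-rs-isom}. Its complement $Z$ is closed, and $\pi_{\fg^*}(Z)$ is closed because $\pi_{\fg^*}$ is proper. We then identify $\fg^*_{\rs}$ with $\fg^*\setminus\pi_{\fg^*}(Z)$. Any point of $\fg^*$ outside $\fg^*_{\rs}$ has a preimage by Lemma \ref{lem:GS-alteration-surjective}, and that preimage lies outside $G\times^B q^{-1}(\ft^*_{\reg})$, again by Proposition \ref{prop:n-perp-rs-isom}. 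This gives openness. Density follows because $\fg^*$ is irreducible and $\fg^*_{\rs}$ is nonempty, using that $\ft^*_{\reg}\ne\emptyset$ for coroot-unramified $G$ by Corollary \ref{cor:reg-t-dual-criterion}.
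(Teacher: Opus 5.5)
Your proof is correct, and the torsor part matches the paper: bijective differential of $\pi'$ at $(1,\eta)$ plus $G$-equivariance gives étaleness, properness of the base change of $\pi_{\fg^*}$ gives finiteness, and $W$ acts simply transitively on fibres. You differ from the paper in two places.

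\begin{enumerate}
\item \emph{The isomorphism $f$.} The paper uses $G$-equivariance to reduce to the big cell $U^-B/B$. There $f$ becomes $(u_1,u_2,\eta)\mapsto(u_1,\Ad^*(u_2)\eta)$ from $U^-\times U\times\ft^*_{\reg}$ to $U^-\times\fn^\perp_{\rs}$. You instead note that the isomorphism $U\times\ft^*_{\reg}\cong\fn^\perp_{\rs}$ of Proposition \ref{prop:n-perp-rs-isom} is $B$-equivariant, so $\fn^\perp_{\rs}\cong B\times^T\ft^*_{\reg}$. Then $G\times^B(B\times^T\ft^*_{\reg})\cong G\times^T\ft^*_{\reg}$ finishes. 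This is chart-free and shows that $B$-equivariance is the only input.
\item \emph{Openness of $\fg^*_{\rs}$.} The paper takes the image of the open set $G\times^B\fn^\perp_{\rs}$ under an \'etale map. You show that the complement of $\fg^*_{\rs}$ is the image of a closed set under the proper map $\pi_{\fg^*}$. The paper's route is shorter. Yours proves openness independently, and that is what justifies forming $\widetilde{\fg}^*_{\rs}$ and the base change $\pi_{\fg^*_{\rs}}$ whose properness you use in step (3). So put that paragraph first.
\end{enumerate}

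Two corrections to the openness paragraph.

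First, your worry that $\pi'$ ``is \'etale only once we know its target is a reasonable subscheme'' is unfounded. Your differential computation already shows that $\pi'\colon G/T\times\ft^*_{\reg}\to\fg^*$ is \'etale as a map into $\fg^*$, hence open. Its image is $\fg^*_{\rs}$ by definition; this is the paper's argument.

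Second, as literally written, $Z$ reads as the complement of the \'etale locus $V$, and then the argument is incomplete. Your last step produces a preimage of a non-regular-semisimple point lying outside $G\times^Bq^{-1}(\ft^*_{\reg})$. That does not put it in $Z$ unless you also prove $V\subseteq G\times^Bq^{-1}(\ft^*_{\reg})$. This inclusion is true: the differential of $\pi_{\fg^*}$ at $[1,\xi]$ is invertible iff $q(\xi)$ is regular, by a triangularity argument for the pairing $(Y,Z)\mapsto\xi([Y,Z])$ on $\fn^-\times\fn$. But it is unnecessary.

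Simply take $Z$ to be the complement of $G\times^Bq^{-1}(\ft^*_{\reg})$. Then both inclusions in $\fg^*\setminus\fg^*_{\rs}=\pi_{\fg^*}(Z)$ follow from Lemma \ref{lem:GS-alteration-surjective} and the equality $\fn^\perp\cap\fg^*_{\rs}=q^{-1}(\ft^*_{\reg})$ of Proposition \ref{prop:n-perp-rs-isom}.
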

\begin{proof}
     To show that $f$ is an isomorphism, by $G$-equivariance it suffices to show that it is an isomorphism when restricted to the inverse image over the open cell $U^-B/B\subset G/B$ (since $G/B$ is covered by the translates of the open cell). Clearly we have isomorphisms
    \[p_1^{-1}(U^-B/B)=U^-B/T\times\ft^*_{\reg}\cong U^-\times U\times\ft^*_{\reg},\]
    \[p_2^{-1}(U^-B/B)=\{(u,\xi)\in U^-\times\fg^*_{\rs}\mid\Ad^*(u)^{-1}\xi\in\fn^\perp_{\rs}\}\cong U^-\times\fn^\perp_{\rs}\]
    where the map in the second isomorphism sends a pair $(u,\xi)$ to $(u,\Ad^*(u)^{-1}\xi)$. Under these isomorphisms, the restriction of the middle map $f$ becomes the map
    \[U^-\times U\times\ft^*_{\reg}\to U^-\times\fn^\perp_{\rs},\quad (u_1,u_2,\eta)\mapsto(u_1,\Ad^*(u_2)\eta)\]
    which is an isomorphism by Proposition \ref{prop:n-perp-rs-isom}. So $f$ is indeed an isomorphism.\par 
    The differential of the lower-left map $\pi'$ at a point $(1,\xi)$, where $\xi\in\ft^*_{\reg}$, is given by
    \[d\pi'_{(1,\xi)}\colon \fg/\ft\oplus\ft^*\to\fg^*,\quad (X,\eta)\mapsto\ad^*(X)(\xi)+\eta.\]
    One easily shows that it is injective and hence bijective as the two sides have the same dimension. Then by $G$-equivariance we see that $\pi'$ is \'etale. Since $f$ is an isomorphism and the diagram commutes, we deduce that $\pi_{\fg^*_{\rs}}$ is \'etale, and being proper, it is hence finite \'etale and the same holds for $\pi'$. \par 
    The rule $(gT,\xi)\mapsto (g\dot{w}^{-1}T,\Ad^*(\dot{w})\xi)$ defines a $W$-action on $G/T\times\ft^*_{\reg}$ that commutes with the $G$-action and such that $\pi'$ is $W$-invariant (with $W$ acting trivially on $\fg^*_{\rs}$). For any $\xi\in\ft^*_{\reg}$, one checks directly by definition that 
    \[(\pi')^{-1}(\xi)=\{(\dot{w}^{-1}T,\Ad^*(\dot{w})\xi)\mid w\in W\}\] 
    (alternatively one can use the isomorphism $f$ and Lemma \ref{lem:rs-Springer-fiber}). So the $W$-action on the fiber $(\pi')^{-1}(\xi)$ is simply transitive. Using $G$-translation and the fact that the $W$-action commutes with the $G$-action, we see that $W$ acts simply transitively on all fibers of $\pi'$. Therefore $\pi'$ is a Galois \'etale $W$-torsor and by transport of structure through $f$, the same holds for $\pi_{\fg^*_{\rs}}$.\par 
    It remains to show the last statement. By Proposition \ref{prop:n-perp-rs-isom} $\fn^\perp_{\rs}$ is a nonempty open subset of $\fn^\perp$. Thus $\widetilde{\fg}^*_{\rs}\cong G\times^B\fn^\perp_{\rs}$ is a nonempty open subset of $\widetilde{\fg}^*\cong G\times^B\fn^\perp$ and therefore its image $\fg^*_{\rs}$ under the finite \'etale map $\pi_{\fg^*_{\rs}}$ is a dense open subset of $\fg^*$.
\end{proof}

\subsection{The dual Chevalley theorem}
Let $\fg^*_0\subset\fg^*$ be the open subset where $\pi_{\fg^*}$ has $0$-dimensional fibers. Since $\pi_{\fg^*}$ is $G$-equivariant, $\fg^*_0$ is a $G$-stable open subset of $\fg^*$. Suppose that $G$ is coroot-unramified. Then Lemma \ref{lem:rs-Springer-fiber} shows that $\fg^*_{\rs}\subset\fg^*_0$ and Lemma \ref{lem:rs-W-torsor} shows that it is a dense open subset. Let $\widetilde{\fg}^*_0\coloneqq\pi_{\fg^*}^{-1}(\fg^*_0)$ and let $\pi_{\fg^*_0}\colon\widetilde{\fg}^*_0\to\fg^*_0$ be the base change of $\pi_{\fg^*}$. Then we have
\[\dim(\widetilde{\fg}^*\setminus\widetilde{\fg}^*_0)\le\dim(\widetilde{\fg}^*)-1=\dim(\fg^*)-1.\]
On the other hand, since the fibers of $\pi_{\fg^*}$ over the complement $\fg^*\setminus\fg^*_0$ has positive dimension, we deduce that
\begin{equation}\label{eq:dim-g-0-complement}
    \dim(\fg^*\setminus\fg^*_0)\le\dim(\widetilde{\fg}^*\setminus\widetilde{\fg}^*_0)-1\le\dim(\fg^*)-2.
\end{equation}
Since $\pi_{\fg^*_0}$ is proper and has $0$-dimensional fibers, it is finite and so integral. Since $\widetilde{\fg}^*_0$ is smooth and a fortiori normal, it equals to the normalization of $\fg^*_0$ in $\widetilde{\fg}^*_{\rs}$. Thus the $W$ action on $\widetilde{\fg}^*_{\rs}$ extends uniquely to $\widetilde{\fg}^*_0$ and the map $\pi_{\fg^*}$ induces a finite morphism from the coarse quotient $\widetilde{\fg}^*_0/W$ to $\fg^*_0$ whose restriction along the dense open subset $\fg^*_{\rs}$ is an isomorphism due to Lemma \ref{lem:rs-W-torsor}. Then the normality of $\widetilde{\fg}^*_0/W$ (which easily follows from the normality of $\widetilde{\fg}^*_0$) implies that it is an isomorphism $\widetilde{\fg}^*_0/W\cong\fg^*_0$.\par 
We also remark that since $\widetilde{\fg}^*_0$ and $\fg^*_0$ are smooth (over $k$), the miracle flatness \cite{EGAIV2}*{Proposition 6.1.5} or \cite{SP}*{\href{https://stacks.math.columbia.edu/tag/00R4}{Lemma 00R4}} implies that $\pi_{\fg^*_0}$ is a finite flat morphism. 
\begin{thm}[Dual Chevalley Restriction]\label{thm:dual-Chevalley-restriction}
    The embedding $\ft^*\into\fg^*$ induces an injective homomorphism of $k$-algebras 
    \[k[\fg^*]^G\into k[\ft^*]^{W}.\]
    which is bijective if $G$ is coroot-unramified.  
\end{thm}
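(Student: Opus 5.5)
The plan has three parts: check that the restriction map lands in $W$-invariants, prove injectivity by a density argument, and prove surjectivity by descending a function from $\widetilde{\fg}^*_0$ to $\fg^*_0$ and then extending across codimension two.

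\textbf{Well-definedness.} If $f\in k[\fg^*]^G$, then $f|_{\ft^*}$ is $W$-invariant. Indeed $\Ad^*(\dot w)$ preserves $\ft^*$, and restricted to $\ft^*$ it is the $W$-action, since $\ft^*$ is the $T$-invariant summand of $\fg^*$ and $N_G(T)$ normalizes $T$.

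\textbf{Injectivity.} Suppose $f\in k[\fg^*]^G$ vanishes on $\ft^*$. By $G$-invariance it vanishes on $\Ad^*(G)\ft^*$. This set contains the image of $\pi_{\fg^*}$ after we check two things. First, every element of $\fn^\perp$ is $B$-conjugate into $\ft^*$ when its projection is regular; this is Proposition \ref{prop:n-perp-rs-isom}. Second, more generally, $G\cdot\ft^*$ is dense in $\fg^*$. Without any coroot assumption, use Lemma \ref{lem:GS-alteration-surjective}: $\fg^*=G\cdot\fn^\perp$. The map $B\times\ft^*\to\fn^\perp$ is dominant, since its differential at a generic point of $\ft^*$ is surjective modulo the nilpotent directions. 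If $\ft^*_{\reg}$ is empty, I would instead argue directly that the closure of $B\cdot\ft^*$ is all of $\fn^\perp$; this degenerate case might need care.

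An alternative for injectivity is to use the contracting $T$-action. For $\lambda$ a regular dominant cocharacter, $\lim_{t\to0}\Ad^*(\lambda(t))$ sends $\eta\in\fn^\perp$ to its projection $q(\eta)\in\ft^*$. Hence any $G$-invariant $f$ satisfies $f(\eta)=f(q(\eta))$ on $\fn^\perp$, and therefore $f$ vanishes on $G\cdot\fn^\perp=\fg^*$. This proof is clean and uses only Lemma \ref{lem:GS-alteration-surjective}, so I would use it.

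\textbf{Surjectivity under coroot-unramifiedness.} Let $h\in k[\ft^*]^W$.
\begin{enumerate}
\item Define $\tilde h$ on $\widetilde{\fg}^*\cong G\times^B\fn^\perp$ by $(g,\eta)\mapsto h(q(\eta))$. This is well defined because the coadjoint $B$-action preserves the fibers of $q$, as shown in the proof of Proposition \ref{prop:n-perp-rs-isom}.
\item Show that $\tilde h$ is $W$-invariant on $\widetilde{\fg}^*_{\rs}$. Via the isomorphism $f$ of Lemma \ref{lem:rs-W-torsor}, $\tilde h$ corresponds to $(gT,\eta)\mapsto h(\eta)$, and the $W$-action sends $\eta$ to $\Ad^*(\dot w)\eta$. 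So invariance follows from $W$-invariance of $h$.
\item Since $\widetilde{\fg}^*_{\rs}$ is dense in $\widetilde{\fg}^*_0$, the function $\tilde h$ is $W$-invariant on $\widetilde{\fg}^*_0$. It therefore descends to a regular function $F$ on $\widetilde{\fg}^*_0/W\cong\fg^*_0$.
\item By \eqref{eq:dim-g-0-complement}, the complement of $\fg^*_0$ has codimension at least $2$ in the normal affine space $\fg^*$. Hartogs' property then extends $F$ to an element of $k[\fg^*]$.
\item $F$ is $G$-invariant, because $\tilde h$ is $G$-invariant and $\pi_{\fg^*}$ is $G$-equivariant; then extend by density. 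Its restriction to $\ft^*_{\reg}$ equals $h$, since $\pi_{\fg^*}(\dot B,\eta)=\eta$. By density of $\ft^*_{\reg}$, which is nonempty by Corollary \ref{cor:reg-t-dual-criterion}, we get $F|_{\ft^*}=h$.
\end{enumerate}

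\textbf{Main obstacle.} The delicate step is the descent to $\fg^*_0$, namely the identification $\widetilde{\fg}^*_0/W\cong\fg^*_0$ and the fact that $W$-invariant functions descend. The paper's preceding discussion supplies both, so the remaining care is in the injectivity limit argument and the Hartogs extension.
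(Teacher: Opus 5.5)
Your final argument is essentially the paper's proof: injectivity by the contracting cocharacter, which gives $f|_{\fn^\perp}=f|_{\ft^*}\circ q$, combined with $G\cdot\fn^\perp=\fg^*$ from Lemma~\ref{lem:GS-alteration-surjective}; surjectivity by pulling $h$ back along $G\times^B\fn^\perp\to\ft^*$, descending through $\widetilde{\fg}^*_0/W\cong\fg^*_0$, and extending by Hartogs, where you additionally (and correctly) check the $G$-invariance of the descended function, which the paper leaves implicit. Dropping the density route was the right call, because it genuinely fails without coroot-unramifiedness: for $\mathrm{PGL}_2$ in characteristic $2$, $\ft^*\subset\fg^*\cong\mathfrak{sl}_2$ consists of scalar matrices fixed by $G$, so neither $G\cdot\ft^*$ nor $B\cdot\ft^*$ is dense.
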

\begin{proof}
    From Lemma \ref{lem:GS-alteration-surjective} we deduce that the co-adjoint action map 
    \[G\times\fn^\perp\to\fg^*\] 
    is surjective. This implies that the natural restriction map $k[\fg^*]^G\to k[\fn^\perp]$ is injective. We claim that its image lies in $k[\ft^*]^W$. Here we are using the surjection $\fn^\perp=(\fg/\fn)^*\to\ft^*$ that is dual to the natural embedding $\ft\cong\fb/\fn\into\fg/\fn$. We need to show that for any function $f\in k[\fg^*]^G$, its restriction to $\fn^\perp$ is invariant under translation by $\fb^\perp=\ker(\fn^\perp\to\ft^*)$. Let $\la\in X_*(T)$ be the sum of all negative roots. Then $\langle\la,\alpha\rangle>0$ for all positive root $\alpha\in\Phi^+$. We let $\bbG_m$ acts on $\fg^*$ through the composition of $\la$ and the co-adjoint representation $\Ad^*$. Then any element $\xi\in\fn^\perp$ is a sum of vectors of non-negative weights and we decompose $\xi=\xi_0+\xi_1$ where $\xi_0$ has weight $0$ and $\xi_1\in\fb^\perp$ is a sum of vectors of positive weights. By $G$-invariance we get
    \[f(\xi)=\lim_{t\to0}f(\Ad^*(\la(t))\xi)=\lim_{t\to0}f(\xi_0+\Ad^*(\la(t))\xi_1)=f(\xi_0)\]
    here the limit means that the expression, viewed as a function in $t\in\bbG_m$, extends to a polynomial function on $\bbA^1$ and we take the value at $0$ of the extended function. This shows that the restriction of $f$ to $\fn^\perp$ lies in $k[\ft^*]$. By the $G$-invariance, it lies further in the subalgebra of $W$-invariants $k[\ft^*]^W$. Thus the natural map $k[\fg^*]^G\to k[\ft^*]^W$, whose composition with $k[\ft^*]^W\into k[\fn^\perp]$ is injective as we observed above, is itself injective.\par 
    Now assume that $G$ is coroot-unramified and it remains to show the surjectivity. Consider the natural morphism 
    \[\tilde{\pi}\colon\widetilde{\fg}^*\cong G\times^B\fn^\perp\to\ft^*\]
    induced by the map $q\colon\fn^\perp\to\ft^*$ that is dual to the embedding $\ft\cong\fb/\fn\into\fg/\fn$. The restriction of of $\tilde{\pi}$ to $\widetilde{\fg}^*_{\rs}$ is $W$-equivariant by the description in Lemma \ref{lem:rs-W-torsor}, and so its restriction to $\widetilde{\fg}^*_0$ is also $W$-equivariant. For any $W$-invariant function $\varphi\in k[\ft^*]^W$, its pullback $\tilde{\varphi}=\tilde{\pi}^*\varphi$ is a $W$-invariant function on $\widetilde{\fg}^*_0$, and due to the isomorphism $\widetilde{\fg}^*_0/W\cong\fg^*_0$ descends to a function $\psi\in k[\fg^*_0]=k[\fg^*]$ where the equality comes from the dimension estimate \eqref{eq:dim-g-0-complement} and Hartog's extension principle. By definition of the map $\tilde{\pi}$, the function $\psi\in k[\fg^*]$ restricts to $\varphi$ along the embedding $\ft^*\into\fg^*$ and this finishes the proof.
\end{proof}

\subsection{Good elements in Lie algebras}\label{subsec:good-elements}
We first recall some basic definitions from \cite{GKM}*{\S1}. Let $\rho\colon G\to\mathrm{GL}(V)$ be a finite dimensional algebraic representation of $G$ and let $V^*$ be the dual representation. For each $\chi\in X^*(T)$, let $V_\chi$ be the corresponding weight space of $V$. 
\begin{defn}\label{def:filtration}
    For any $y\in X_*(T)_\bbR$, we define an $\bbR$ filtration on $V$ by 
    \[F_y^tV\coloneqq\bigoplus_{\substack{\chi\in X^*(T),\\ \chi(y)\ge t}}V_\chi.\]
\end{defn}

\begin{defn}
    A vector $v\in V$ is \emph{$G$-unstable} if there exists $g\in G$, $y\in X_*(T)_\bbR$ and $t>0$ such that $v\in \rho(g)F_y^tV$. Equivalently, $v\in V$ is $G$-unstable if there exists a one-parameter subgroup $\la\colon\bbG_m\to G$ such that $\lim\limits_{t\to0}\rho(\la(t))v=0$.
\end{defn}
\begin{defn}\label{def:good-vector}
    A vector $v\in V$ is \emph{$G$-good} if there is no nonzero $G$-unstable vector $v^*$ in the dual representation $V^*$ that vanishes identically on the subspace $\fg\cdot v$ of $V$.
\end{defn}
Since any two maximal tori in $G$ are conjugate, the notions of $G$-unstable and $G$-good do not depend on the choice of $T$.\par 
Now we specialize to the case of the adjoint representations.
\begin{prop}\label{prop:rs-good}
    If $G$ is coroot-unramified (resp. root-smooth), then any nonzero element in $\ft^*$ (resp. $\ft$) is not $G$-unstable.\par 
    If $G$ is both coroot-unramified and root-smooth, then any element in $\fg_{\rs}$ or $\fg^*_{\rs}$ is $G$-good.
\end{prop}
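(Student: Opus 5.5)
Both parts rest on the same mechanism. A $G$-unstable vector has a one-parameter subgroup contracting it to $0$, so every $G$-invariant function takes the same value on it as at the origin. The (dual) Chevalley restriction theorem then says that, on $\ft$ or $\ft^*$, the invariant functions see exactly the $W$-orbits.

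\emph{Step 1 (non-instability).} Assume $G$ is coroot-unramified and let $0\ne\eta\in\ft^*$. Suppose $\eta$ were $G$-unstable, with $\lim_{t\to0}\Ad^*(\la(t))\eta=0$ for some one-parameter subgroup $\la$. For every $f\in k[\fg^*]^G$ we get
\[f(\eta)=\lim_{t\to 0}f(\Ad^*(\la(t))\eta)=f(0).\]
By Theorem \ref{thm:dual-Chevalley-restriction}, restriction gives a bijection $k[\fg^*]^G\cong k[\ft^*]^W$. Hence every $W$-invariant polynomial on $\ft^*$ takes the same value at $\eta$ and at $0$. Since $W$ is finite, $k[\ft^*]^W$ separates $W$-orbits in $\ft^*$ (as $k[\ft^*]$ is integral over it), so $\eta\in W\cdot 0=\{0\}$, a contradiction. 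The same argument works for $\ft$ when $G$ is root-smooth, using Theorem \ref{thm:chevalley}.

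\emph{Step 2 (goodness).} Let $x\in\fg_{\rs}$. Since goodness is invariant under conjugation, we may assume $x\in\ft$ regular. We must show that any $v^*\in\fg^*$ vanishing on $[\fg,x]$ and $G$-unstable is zero.

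The stabilizer of $x$ contains $T$, and regularity gives $\alpha(x)\neq0$ for every root $\alpha$. Hence $[\fg,x]\supset\bigoplus_\alpha\fg_\alpha$. This uses that $d\alpha\neq0$, which is root-smoothness; I expect regular semisimple to mean $d\alpha(x)\ne0$ for all $\alpha$, equivalently centralizer $\ft$. Therefore $v^*$ kills every root space, so $v^*\in\ft^*$. By Step 1 in the coroot-unramified case, $v^*$ is not unstable unless $v^*=0$.

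Dually, let $\xi\in\fg^*_{\rs}$, and conjugate so that $\xi\in\ft^*_{\reg}$. Then $\fg\cdot\xi=\ad^*(\fg)\xi$ contains $\ad^*(X_\alpha)\xi$ for each root $\alpha$. This is the functional $Y\mapsto-\xi([X_\alpha,Y])$, which is supported on the $\fg_{-\alpha}$ coordinate with coefficient $\pm\xi(d\alpha^\vee(1))\ne0$ by Corollary \ref{cor:reg-t-dual-criterion}. So $\fg\cdot\xi\supset\bigoplus_\alpha(\fg_\alpha)^*$, viewing these via the dual basis, i.e.\ the annihilator of $\ft\oplus\bigoplus_{\beta\neq-\alpha}\fg_\beta$. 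Thus an element $v\in\fg=(\fg^*)^*$ killing $\fg\cdot\xi$ must lie in $\ft$. By Step 1 in the root-smooth case, a nonzero $v\in\ft$ is not unstable, so $\xi$ is good.

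\emph{Main obstacle.} The computation of $[X_\alpha,Y]$ paired with $\xi$ needs care: one checks that $\xi([X_\alpha,X_{-\alpha}])=\xi(d\alpha^\vee(1))$ up to sign via the $\SL_2$ homomorphism $i_\alpha$. One must also verify that the paper's notion of regular semisimple in $\fg$ gives $\alpha(x)\ne0$ for all roots; if the definition is via stabilizers, this follows from the analogue of Lemma (1) for $\fg_x$. The separation of $W$-orbits by invariants is standard.
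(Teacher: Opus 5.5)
Your proposal is correct and follows essentially the same route as the paper. You combine the contraction argument with the (dual) Chevalley restriction theorem to show that $\ft^*$ (resp.\ $\ft$) contains no nonzero unstable vectors. Then, for $\gamma\in\ft_{\reg}$ and $\eta\in\ft^*_{\reg}$, the equalities $[\fg,\gamma]=\fn\oplus\fn^-$ and $\fg\cdot\eta=\ft^\perp$ force any annihilating vector into $\ft^*$ resp.\ $\ft$.

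Your write-up is slightly more careful than the paper's: you write $f(\eta)=f(0)$ rather than saying $\eta$ lies in the zero locus of all invariants, and you justify why $W$-invariants separate $0$ from $\eta$. One small correction: the inclusion $[\fg,x]\supset\bigoplus_\alpha\fg_\alpha$ comes from regularity of $x$, not from root-smoothness. Root-smoothness is actually used in the dual case, to rule out unstable vectors in $\ft$.
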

\begin{proof}
    First assume that $G$ is coroot-unramified. Let $\xi\in\ft^*$ be an element that is $G$-unstable in $\fg^*$. Then there exists a one parameter subgroup $\la\colon\bbG_m\to G$ such that $\lim\limits_{t\to0}\Ad^*(\la(t))\xi=0$ and so $\xi$ is in the zero loci of any function in $k[\fg^*]^G$. By the dual Chevalley restriction Theorem \ref{thm:dual-Chevalley-restriction} we see that $\varphi(\xi)=0$ for all $\varphi\in k[\ft^*]^W$ and so $\xi=0$. Therefore any nonzero element in $\ft^*$ is not $G$-unstable.\par
    When $G$ is root-smooth, one proves by the same argument that any nonzero element in $\ft$ if not $G$-unstable, using the usual Chevalley restriction theorem \ref{thm:chevalley} instead.\par  
    Now assume that $G$ is both coroot-unramified and root-smooth. To show that any element in $\fg_{\rs}$ is $G$-good, it suffices to show that any element $\ga\in\ft_{\reg}$ is $G$-good. We have $\fg\cdot\ga=[\fg,\ga]=\fn\oplus\fn_-$. If $\xi\in\fg^*$ is a $G$-unstable element such that $[\fg,\ga]\subset\ker\xi$, then we have $\xi\in\ft^*$ and hence $\xi=0$ by what we already proved. Thus the element $\ga$ is $G$-good. Similarly, to show that any element in $\fg^*_{\rs}$ is $G$-good it suffices to verify that any element $\eta\in\ft^*_{\reg}$ is $G$-good. We have $\fg\cdot\eta=\ft^\perp$ and hence any element of $\fg$ that annihilates $\fg\cdot\eta$ lies in $\ft$. Since $\ft$ has no nonzero $G$-unstable element, we conclude that $\eta$ is $G$-good.  
\end{proof}

\section{Review on Bruhat-Tits and Moy-Prasad theory}\label{sec:BT-MP}
In this section we set up the notations for the study of Witt vector affine Springer fibers, and review some basic facts from Bruhat-Tits theory and Moy-Prasad theory.
\subsection{Local fields and extensions}
Fix a prime number $p>0$ and let $k$ be an algebraically closed field of characteristic $p$. For any $k$-algebra $R$, we let $W(R)$ be its ring of $p$-typical Witt vectors. Let $\cO_0=W(k)$ and let $F_0=W(k)[\frac{1}{p}]$ be the fraction field of $\cO_0$. Let $F$ be a finite (totally ramified) extension of $F_0$ and Let $\cO$ be the ring of integers in $F$. Then $F$ is a complete discrete valued field of characteristic zero with residue field $k$. Fix a (separable) algebraic closure $\overline{F}$ of $F$. The $p$-adic valuation on $F_0$ extends uniquely to $\overline{F}$, which we denote by $\mathrm{val}$. The associated $p$-adic absolute value on $\overline{F}$ is defined by $|x|\coloneqq p^{-\mathrm{val}(x)}$ for any $x\in\overline{F}$.\par
Let $F^t$ be the maximal tamely ramified extension of $F$ in $\overline{F}$. Let $\cO^t$ (resp. $\overline{\cO}$) be the valuation ring of $F^t$ (resp. $\overline{F}$). Let $\Gamma\coloneqq\mathrm{Gal}(\overline{F}/F)$ be the absolute Galois group of $F$ (which coincides with the inertia group since the residue field is algebraically closed by assumption). Let $P\coloneqq\mathrm{Gal}(\overline{F}/F^t)$ be the wild inertia group and $\Gamma^t\coloneqq \Gamma/P=\mathrm{Gal}(F^t/F)$ be the tame inertia group.\par 
Let $\bbN^{(p)}\coloneqq\bbN\setminus p\bbN$ be the multiplicative monoid of positive integers prime to $p$. Fix a compatible system of primitive roots of unity $\{\zeta_m,m\in\bbN^{(p)}\}$ in $F$, where each $\zeta_m$ is a primitive $m$-th root of unit such that $\zeta_{mm'}^m=\zeta_{m'}$ for all $m,m'\in\bbN^{(p)}$.\par 
Fix a uniformizer $\varpi\in F$. For each $m\in\bbN^{(p)}$, fix an element $\varpi_m\in F^t$ such that $\varpi_1=\varpi$ and for each $m,m'\in\bbN^{(p)}$ we have $(\varpi_{mm'})^m=\varpi_{m'}$. Denote $F_m\coloneqq F(\varpi_m)$ and let $\cO_m\subset F_m$ be its valuation ring. Then we have $F^t=\bigcup\limits_{m\in\bbN^{(p)}}F_m$.
Let $\mu_m$ be the group of $m$-th roots of unity in $F$ and let 
\[\Hat{\bbZ}(1)^{(p)}\coloneqq\varprojlim_{m\in\bbN^{(p)}}\mu_m\] 
be the group of compatible systems of roots of unity of order prime to $p$ in $F$. Then we have a canonical isomorphism $\Gamma^t\cong\Hat{\bbZ}(1)^{(p)}$ and the compatible system $\zeta\coloneqq(\zeta_m)_{m\in\bbN^{(p)}}$ is a topological generator of $\Gamma^t$.

\subsection{Reductive groups over local fields}\label{sec:group-local-field}
Let $G$ be a connected reductive group over $F$. Since $F$ is complete and the residue field $k$ is algebraically closed, $G$ is automatically quasi-split by Steinberg's theorem. We will fix a pinning of $G$ as follows.
Let $F'/F$ be the \emph{minimal} Galois extension such that $G$ becomes split over $F'$. Let $\bbG$ be the split connected reductive group scheme over $\bbZ$ with Lie algebra $\bbg\coloneqq\Lie(\bbG)$, together with an isomorphism $\bbG_{F'}\cong G_{F'}$. Fix a pinning $(\bbT,\bbB,(x_\alpha)_{\alpha\in\Delta})$ of $\bbG$ defined over $\bbZ$, consisting of a split maximal torus $\bbT$, a Borel subgroup $\bbB$ containing $\bbT$ and for each simple root $\alpha\in\Delta$ a collection of $\bbZ$-basis vectors $x_\alpha\in\bbg_\alpha(\bbZ)$ in the root space $\bbg_\alpha\subset\bbg$. Let $\bbt$ be the Lie algebra of $\bbT$, viewed either as a $\bbZ$-module or a scheme. Let $\sG\coloneqq\bbG_k$ (resp. $\sT\coloneqq\bbT_k$, $\sB\coloneqq\bbB_k$) be the base change of $\bbG$ (resp. $\bbT,\bbB$) to $k$. In the following we will often simplify notation and still use $\bbG$, $\bbT$ etc. to denote their base change to other rings. We hope this will not cause any confusion since in each case it will be clear from the context what are the base rings.\par
The group $\mathrm{Out}(\bbG)$ of outer automorphisms of $\bbG$ can be identified with the subgroup of the automorphism group scheme $\mathrm{Aut}(\bbG)$ that stabilizes the chosen pinning. The group $G$ is the twisted form of $\bbG$ associated to a homomorphism 
\[\rho_G:\mathrm{Gal}(F'/F)\to\mathrm{Out}(\bbG).\]
We also get a Borel subgroup $B$ and maximal torus $T$ of $G$, both defined over $F$. Let $\ft$ (resp. $\fb$) be the Lie algebra of $T$ (resp. $B$). Let $S\subset T$ be the maximal $F$-split sub-torus and let $N$ be the normalizer of $S$ in $G$. Let $\widetilde{W}\coloneqq N(F)/\cT(\cO)$ be the Iwahori-Weyl group of $G$. 

\subsection{Tamely ramified groups}\label{subsec:special-integral-model}
From now on till the end of the paper we assume moreover that the splitting extension for $G$ is $F'=F_e$, a \emph{tamely} ramified extension of $F$ of degree $e\in\bbN^{(p)}$. Let us construct a special integral model $\cG_0$ of $G$ and an Iwahori subgroup $\bfI\subset G(F)$ explicitly.\par 
The choice of the uniformizer $\varpi_e\in\cO_e$ as above determines an isomorphism
\begin{equation}\label{eq:Gal-mu-e-isom}
    \mathrm{Gal}(F_e/F)\cong\mu_e
\end{equation}
and an action of $\mu_e$ on $\bbG$ via a homomorphism
\begin{equation}\label{eq:rho-G}
    \rho_G\colon\mu_e\to\mathrm{Out}(\bbG).
\end{equation}
Let $\cG_0^\dagger\coloneqq(\mathrm{Res}_{\cO_e/\cO}\bbG)^{\mu_e}$ where $\mu_e$ acts simultaneously on $\cO_e$ and $\bbG$ (through $\rho_G$). Let $\cG_0$ be the fiberwise identity component of $\cG_0^\dagger$. Similarly, we let $\cT$ be the fiberwise identity component of $(\mathrm{Res}_{\cO_e/\cO}\bbT)^{\mu_e}$. By \cite{Chi-Witt}*{Lemma 2.1 and Lemma 2.2}, the group schemes $\cG_0,\cG_0^\dagger,\cT$ are affine and smooth over $\cO$. The special fiber of $\cG_0$ is $\mathsf{G}_0\coloneqq\sG^{\mu_e,\circ}$, the identity component of the fixed point loci of the $\rho_G(\mu_e)$ action on $\sG$. Let $\mathsf{A}\coloneqq\bbT^{\mu_e,\circ}$ be the special fiber of $\cT$. Then $\sG_0$ is a reductive group over $k$ and $\sA$ is a maximal torus of $\sG_0$. \par
By construction there is a natural reduction map $\mathrm{Red}_0:\cG_0^\dagger\to\sG$ whose image is $\sG^{\mu_e}$. It restricts to a surjective homomorphism (that we denote by the same symbol)
\begin{equation}\label{eq:special-group-reduction}
    \mathrm{Red}_0:\cG_0\to\sG_0
\end{equation}
Let $\bfI\subset\cG_0(\cO)$ be the inverse image of $\bbB(k)\cap\sG_0(k)$ under $\mathrm{Red}_0$. Then $\bfI$ is an Iwahori subgroup of $G(F)$.\par 
If $G$ is split, then $e=1$ and $\cG_0=\cG_0^\dagger=\bbG$.

\subsection{The Bruhat-Tits buildings}
Let $\La\coloneqq X_*(\bbT)$ be the coweight lattice of $\bbT$ and denote 
\[\La^{(p)}\coloneqq\La\otimes_\bbZ\bbZ_{(p)},\quad\La_\bbQ\coloneqq\La\otimes_\bbZ\bbQ,\quad\La_\bbR\coloneqq\La\otimes_\bbZ\bbR.\]
Following \cite{KP} we let $\cB(G)$ denote the reduced Bruhat-Tits building of $G$ and let $\widetilde{\cB}(G)$ denote the extended Bruhat-Tits building of $G$. The integral model $\cG_0$ constructed in \S~\ref{subsec:special-integral-model} determines a special point ``$0$" in $\widetilde{\cB}(G)$ and we use it to identify the apartment of $\widetilde{\cB}(G)$ corresponding to $T$ with the $\mu_e$-invariant subspace $\La_\bbR^{\mu_e}$. The Iwahori subgroup $\bfI\subset G(F)$ defined above corresponds to an alcove in the reduced building $\cB(G)$ whose closure contains the image of the special point $0$. 

\subsection{Parahoric group scheme}\label{sec:parahoric-group-scheme}
For each $x\in\La^{(p),\mu_e}\coloneqq\La^{(p)}\cap\La_\bbR^{\mu_e}$, let $\cG_x$ be the associated Bruhat-Tits parahoric group scheme. Recall that $\cG_x$ is a smooth affine group scheme over $\cO$ with connected geometric fibers. The generic fiber of $\cG_x$ is isomorphic to $G$ and the set of $\cO$-points $\bfP_x\coloneqq\cG_x(\cO)$ is a parahoric subgroup of $G(F)$. When $x=0$ we recover the special integral model $\cG_0$ constructed in \S~\ref{subsec:special-integral-model}.\par

Let $\sG_x$ be the reductive quotient of the special fiber of $\cG_x$. It can be described more concretely as follows. We view $x$ as a homomorphism:
\[x\colon \hat{\bbZ}(1)^{(p)}\to\bbT(\Bar{k})\]
and consider the homomorphism $\alpha_x:\Hat{\bbZ}(1)^{(p)}\to\mathrm{Aut}(\bbG)$ defined by 
\[\alpha_x(\zeta)=\mathrm{Ad}(x(\zeta))\circ\rho_0(\zeta),\quad \text{for all }\zeta\in\Hat{\bbZ}(1)^{(p)}.\] 
Then $\sG_x$ is the identity component of the fixed point loci of $\alpha_x(\Hat{\bbZ}(1)^{(p)})$ in $\sG$. Moreover, the torus $\sA$ defined in \S\ref{subsec:special-integral-model} is also a maximal torus in $\sG_x$. \par
Let $\bfP_{x+}$ be the pro-unipotent radical of $\bfP_x$. Choose $m\in\bbN^{(p)}$ such that $mx\in\La$ and let $\varpi^x\coloneqq mx(\varpi_m)$. By the compatibility of the uniformizers $\varpi_m$ we see that $\varpi^x$ is independent of the choice of $m$. Then we have
\begin{equation}\label{eq:parahoric-group-description}
    \cG_x(\cO)=\mathrm{Ad}(\varpi^x)\cG_0(\cO_m)\cap G(F).
\end{equation}
We define a specialization map $\mathrm{Red}_{x}:\bfP_x\to\sG(k)$ to be the composition of the following morphisms
\begin{equation}\label{eq:red-Px}
    \mathrm{Red}_x\colon\bfP_x=\cG_x(\cO)\xrightarrow{\mathrm{Ad}(x(\varpi))^{-1}}\cG_0(\cO_m)\to\sG(k)
\end{equation}
where the last arrow is induced by \eqref{eq:special-group-reduction}. Then $\mathrm{Red}_x$ induces a canonical isomorphism
\begin{equation}\label{eq:Px-red-quotient}
    \bfP_x/\bfP_{x+}\cong\sG_x(k)
\end{equation}
that is independent of the choice of $m\in\bbN^{(p)}$ and the uniformizers $\varpi_m$. See for example \cite{Wa06}*{Lemme 2.3.1}.

\subsection{Moy-Prasad filtrations}
In the following we will study objects associated to the following data:
\begin{defn}\label{def:MP-pair}
    A \emph{Moy-Prasad pair} $(x,r)$ consists of an element $x\in\La^{(p),\mu_e}$ and a number $r\in\bbZ_{(p)}$.\par
    A \emph{Moy-Prasad triple} $(x,r,m)$ consists of a Moy-Prasad pair $(x,r)$ together with a positive integer $m\in\bbN^{(p)}$ such that  $mx\in\La$, $mr\in\bbZ$ and $e|m$.
\end{defn}
For each Moy-Prasad pair $(x,r)$ with $r\ge0$, there is a normal open bounded subgroup $\bfP_{x,r}\subset\bfP_x$ such that $\bfP_{x,0}=\bfP_x$ and for any $r<s$, $\bfP_{x,s}\subset\bfP_{x,r}$ is a normal subgroup. In particular, the union 
\[\bfP_{x,r+}\coloneqq\bigcup_{s>r}\bfP_{x,s}\] 
is a normal subgroup of $\bfP_{x,r}$ and $\bfP_{x,0+}=\bfP_{x+}$ is the pro-unipotent radical of $\bfP_x$. The subgroups $\{\bfP_{x,r}, r\ge0\}$ form a fundamental system of neighborhood of identity in $G(F)$.\par 
The successive quotients $\sG_{x}(r)\coloneqq\bfP_{x,r}/\bfP_{x,r+}$ are (the $k$-points of) algebraic groups defined over the residue field $k$. In particular, $\sG_x(0)=\sG_x(k)$ is the reductive quotient of $\bfP_x$ (viewed as a pro-algebraic group over $k$); while if $r>0$, then $\sG_x(r)$ is a vector group (i.e. the underlying additive group of a $k$-vector spaces) which can be described by the Lie algebra as follows.\par 
Let $\fg_{x,r}$ (resp. $\fg_{x,r+}$) be the Lie algebra of $\bfP_{x,r}$ (resp. $\bfP_{x,r+}$). Then 
\[\fg_x(r)\coloneqq\fg_{x,r}/\fg_{x,r+}\] 
is a $k$-vector space and when $r>0$ there is an isomorphism of vector groups (see \cite{KP}*{Theorem 13.5.1}): 
\begin{equation}\label{eq:MP-isom}
    \xi_{x,r}\colon\sG_x(r)=\bfP_{x,r}/\bfP_{x,r+}\xrightarrow{\sim}\fg_x(r)=\fg_{x,r}/\fg_{x,r+}.
\end{equation}
When $r=0$, we denote $\fg_x\coloneqq\Lie(\bfP_{x})$ and $\fg_{x+}\coloneqq\Lie(\bfP_{x+})$. Then we have $\fg_x/\fg_{x+}\cong\Lie(\sG_x)$. \par
Choose $m\in\bbN^{(p)}$ such that $(x,r,m)$ form a Moy-Prasad triple. Then $\bfP_{x,r}\ne\bfP_{x,r+}$ only if $r\in\frac{1}{m}\bbZ$. For each $r\in\frac{1}{m}\bbZ\cap\bbQ_{\ge0}$, the $\cO$-lattice $\fg_{x,r}\subset\fg(F)$ admits a simple description that we now explain. First we describe the analogous $\cO_m$-lattice $\fg(F_m)_{x,r}\subset\fg(F_m)$. Note that $x\in\frac{1}{m}\La^{\mu_e}$ is a hyperspecial point of the extended Bruhat-Tits building for $G(F_m)$. So we have
\[\fg(F_m)_{x,r}=\varpi_m^{mr}\Ad(\varpi^x)(\bbg(\cO_m))\] 
and consequently
\[\fg_{x,r}=\fg(F)\cap\fg(F_m)_{x,r}=\varpi_m^{mr}\Ad(\varpi^x)(\bbg(\cO_m))\cap\fg(F).\]
Using this formula, we can extend the definition of $\fg_{x,r}$, $\fg_{x,r+}$ and $\fg_x(r)$ to the case $r\in\bbZ_{(p)}$ is not necessarily non-negative.\par
To describe the group $\bfP_{x,r}$ in a similar way, we consider the $r$-th congruence subgroup of $\cG_0(\cO_m)$ defined by
\[\cG_0(\cO_m)_r\coloneqq\ker(\cG_0(\cO_m)\to\cG_0(\cO_m/\varpi_m^{mr})).\]
Then we have 
\begin{equation}\label{eq:MP-subgp}
    \bfP_{x,r}=\mathrm{Ad}(\varpi^x)\cG_0(\cO_m)_r\cap G(F).   
\end{equation}

\section{Generalization of the Moy-Prasad filtration}\label{sec:generalization-MP-filtration}
In this section we describe a natural generalization of the Moy-Prasad filtration on Lie algebras to general representation spaces. This will provide the natural context for our main theorem. We keep the notations in \S\ref{subsec:special-integral-model}.
\subsection{The setup}\label{subsec:generalized-MP-setup}
Let $\bbV$ be a free $\cO$-module of finite rank and let 
\[\rho\colon\bbG\rtimes\mu_e\to\mathrm{GL}(\bbV)\] 
be an algebraic representation, viewed as a homomorphism of $\cO$-group schemes. Here the semi-direct product is formed under the homomorphism \eqref{eq:rho-G} and the $\cO$-module $\bbV$ is also viewed as an $\cO$-scheme whose functor of points is $\bbV(R)\coloneqq\bbV\otimes_\cO R$ for any $\cO$-algebra $R$. Let $\cV\coloneqq(\bbV\otimes_\cO\cO')^{\mu_e}$ where $\mu_e$ acts diagonally (the action on $\bbV$ comes from the representation $\rho$). Then $\cV$ is a finite free $\cO$-module with $\cV\otimes_\cO F'=\bbV\otimes_\cO F'$ and $\rho$ induces an algebraic representation 
\[\rho_\cV\colon\cG_0\to\mathrm{GL}(\cV).\]
The restriction of $\rho$ to $\bbT$ induces a decomposition
\begin{equation}\label{eq:V-first-decomposition}
  \bbV=\bigoplus_{\chi\in\sX_\rho}\bbV_\chi  
\end{equation}
where $\sX_\rho\subset X^*(\bbT)$ is the finite $\bbW\rtimes\mu_e$-stable subset of weights of $\rho$ and $\bbV_\chi$ is the finite free $\cO$-submodule of $\bbV$ on which $\bbT$ acts via the character $\chi$. 
We group the decomposition according to $\mu_e$-orbits on $\sX_\rho$ to get
\begin{equation}\label{eq:V-lattice-decmoposition}
    \bbV=\bigoplus_{\chi\in\sX_\rho/\mu_e}\bbV_{\chi}
\end{equation}
where each summand $\bbV_{\chi}\coloneqq\bigoplus\limits_{\chi'\in\mu_e\cdot\chi}\bbV_{\chi'}$ is preserved by the $\mu_e$-action on $\bbV$. Then the decomposition \eqref{eq:V-lattice-decmoposition} induces a decomposition of the twisted form
\[\cV=\bigoplus_{\chi\in\sX_\rho/\mu_e}\cV_{\chi}.\]
where $\cV_{\chi}\coloneqq(\bbV_{\chi}\otimes_\cO\cO')^{\mu_e}$.\par 
In the special case where $\bbV=\bbg$ is the adjoint representation on the Lie algebra, we have $\cV=\Lie(\cG_0)$ and the decompositions above are the root space decompositions.  
\subsection{Periodic grading}
Let $\sV\coloneqq\bbV\otimes_\cO k$ and $\sV_\chi\coloneqq\bbV_\chi\otimes_\cO k$ for each $\chi\in\sX_\rho$ or $\chi\in\sX_{\rho}/\mu_e$. Let 
\[\Bar{\rho}=\rho\otimes_\cO k:\sG\rtimes\mu_e\to\mathrm{GL}(\sV)\] 
be the representation induced by $\rho$. \par
For each $x\in\La^{(p),\mu_e}$, viewed as a homomorphism $\hat{\bbZ}(1)^{(p)}\to\bbT$, we define a map 
$\rho_x\colon\hat{\bbZ}(1)^{(p)}\to\mathrm{GL}(\bbV)$ by the formula 
\begin{equation}\label{eq:rho-x}
    \rho_x(\zeta)\coloneqq\rho(x(\zeta)\bar{\zeta}),\quad\text{for all }\zeta\in\hat{\bbZ}(1)^{(p)}
\end{equation}
where $\bar{\zeta}$ denotes the image of $\zeta$ in $\mu_e$. Since $x$ is $\mu_e$-invariant, the map $\rho_x$ is a group homomorphism. For any $m\in\bbN^{(p)}$ such that $mx\in\La$ and $e|m$, the homomorphism $\rho_x$ factors through the finite \'etale group scheme $\mu_m$ over $\cO$. Let $\bar{\rho}_x:\hat{\bbZ}(1)^{(p)}\to\mathrm{GL}(\sV)$ be the special fiber of $\rho_x$. Then we get a decomposition of $\sV$ according to the character group $X^*(\hat\bbZ(1)^{(p)})=\bbZ^{(p)}/\bbZ$:
\[\sV=\bigoplus_{\Bar{r}\in\bbZ_{(p)}/\bbZ}\sV(x,\Bar{r})\]
where
\[\sV(x,\Bar{r})\coloneqq\bigoplus_{\substack{\chi\in\sX_\rho,\\ \chi(x)\in\Bar{r}+\bbZ}}\sV_\chi=\{u\in\sV\mid\bar{\rho}_x(\zeta)u=\zeta^{\Bar{r}}\cdot u,\forall\zeta\in\hat\bbZ(1)^{(p)}\}.\]
In the case of adjoint representation, we get the cyclic grading on the Lie algebra
\[\Lie(\sG)=\bigoplus_{\bar{r}\in\bbZ_{(p)}/\bbZ}\Lie(\sG)(x,\bar{r}).\]

\subsection{Moy-Prasad lattices}
Let $(x,r,m)$ be a Moy-Prasad triple. Consider the $\cO$-lattice in $V=\cV(F)$ defined by
\[V_{x,r}\coloneqq\cV(F)\cap\rho(\varpi^{-x})(\varpi^r\bbV(\cO_m))\]
where the intersection is taken inside $\bbV(F_m)$. Here we recall that 
\[\varpi^{-x}\coloneqq(mx)(\varpi_m)^{-1}\in\bbT(\cO_m),\quad\varpi^r\coloneqq\varpi_m^{mr}\in F_m.\]
Alternatively, we have 
\begin{align*}
    V_{x,r}&=\{v\in V=\cV(F)\mid\varpi^{-r}\rho(\varpi^x)v\in\bbV(\cO_m)\}\\
    &=\{v\in V=\cV(F)\mid\varpi^{-r}\rho(\varpi^x)v\in\bbV(\overline{\cO})\}.
\end{align*}
In particular, the lattice $V_{x,r}$ depends only on $(x,r)$ and not on $m$. Moreover, we see from the definition that $V_{x,r'}\subset V_{x,r}$ for all $r'>r, r'\in\bbZ_{(p)}$. Then the union
\[\quad V_{x,r+}\coloneqq\bigcup_{r'>r,r'\in\bbZ_{(p)}}V_{x,r'}\] 
is an $\cO$-submodule of $V_{x,r}$ and the quotient 
\[V_x(r)\coloneqq V_{x,r}/V_{x,r+}\]
is a finite dimensional $k$-vector space (possibly $0$). \par
Using the decomposition \eqref{eq:V-lattice-decmoposition}, we get
\[V_{x,r}=\bigoplus_{\chi\in\sX_\rho/\mu_e}\varpi_m^{mr-m\chi(x)}\bbV_{\chi}(\cO_m)\cap\cV_{\chi}(F)\]
where in each summand, the intersection is taken inside $\cV_{\chi}(F_m)$.\par
By \eqref{eq:parahoric-group-description} the lattices $V_{x,r}$ and $V_{x,r+}$ are $\bfP_x=\cG_x(\cO)$-stable and their quotient $V_x(r)$ becomes a representation of $\sG_x\cong\bfP_x/\bfP_{x+}$. \par 
In the case where $\cV=\Lie(\cG_0)$ is the adjoint representation of $\cG_0$ and $r\ge0$, the resulting lattice $\fg_{x,r}\subset\fg(F)$ is the Lie algebra of the Moy-Prasad group $\bfP_{x,r}$.\par 
\begin{defn}\label{def:two-pair}
    Let $(x,r)$ and $(y,t)$ be two Moy-Prasad pairs, where $x,y\in\La^{(p),\mu_e}$ and $r,t\in\bbZ_{(p)}$. We define
    \[F_y^tV_x(r)\coloneqq\Image[V_{y,t}\cap V_{x,r}\to V_x(r)=V_{x,r}/V_{x,r+}].\]
    In particular in the Lie algebra case we get
    \[F_y^t\fg_x(r)\coloneqq\Image[\fg_{y,t}\cap\fg_{x,r}\to\fg_x(r)=\fg_{x,r}/\fg_{x,r+}]\]
\end{defn}

\subsection{Reduction maps}
For any Moy-Prasad pair $(x,r)$, we define the $\cO$-linear map 
\begin{equation}\label{eq:red}
    \mathrm{Red}_{x,r}^V\colon V_{x,r}\to\sV
\end{equation}
to be the composition of the following two maps:
\[V_{x,r}\xrightarrow{\varpi^{-r}\rho(\varpi^x)}\bbV(\cO_m)\xrightarrow{-\otimes_{\cO_m}k}\sV\]
where the second map is reduction mod $\varpi_m\in\cO_m$. Note that the $\mathrm{Red}_{x,r}^V$ only depends on the Moy-Prasad pair $(x,r)$.
\begin{lem}\label{lem:red-V}
    With notations as above, the image of $\mathrm{Red}_{x,r}^V$ is equal to $\sV(x,\Bar{r})$ and the kernel of $\mathrm{Red}_{x,r}^V$ is $V_{x,r+}$. In other words, $\mathrm{Red}_{x,r}^V$ induces an isomorphism 
    \begin{equation}\label{eq:reduction-eq}
        \iota_{x,r}^V\colon V_x(r)=V_{x,r}/V_{x,r+}\xrightarrow{\sim}\sV(x,\Bar{r})
    \end{equation}
    where $\Bar{r}\in\bbZ_{(p)}/\bbZ$ is the class of $r$ mod $\bbZ$.\par 
    Moreover, for any other pair $(y,t)$ with $y\in\La^{(p),\mu_e}$ and $t\in\bbZ_{(p)}$, the isomorphism $\iota_{x,r}^V$ maps the subspace $F_y^tV_x(r)$ (cf. Definition \ref{def:two-pair}) to the subspace $F^{t-r}_{y-x}\sV(x,\bar{r})$ (cf. Definition \ref{def:filtration}). 
\end{lem}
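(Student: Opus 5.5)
The plan is to reduce everything to a single weight-isotypic summand via the decomposition $V_{x,r}=\bigoplus_{\chi\in\sX_\rho/\mu_e}\bigl(\varpi_m^{mr-m\chi(x)}\bbV_\chi(\cO_m)\cap\cV_\chi(F)\bigr)$, and then to use Galois descent for the tame extension $F_m/F$ with group $\mu_m$. Fix a Moy-Prasad triple $(x,r,m)$. The map $\phi\colon v\mapsto\varpi^{-r}\rho(\varpi^x)v$ is injective from $\bbV(F_m)$ to itself. We have $\cV(F)=\bbV(F_m)^{\mu_m}$, where $\mu_m$ acts through $\mu_e$ on $\bbV$ and on the scalars $F_m$. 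A direct computation then shows that $\phi$ intertwines this action with a twisted action. Since $\zeta(\varpi_m)=\zeta\varpi_m$, the element $\varpi^x=(mx)(\varpi_m)$ is sent to $x(\zeta)\varpi^x$, and $\varpi^r$ is sent to $\zeta^{mr}\varpi^r$. Hence $\phi(\cV(F))$ is the set of $w\in\bbV(F_m)$ with $\sigma_\zeta(w)=\zeta^{-r}\rho_x(\zeta)^{-1}w$, or with the analogous normalization; the exact sign convention has to be fixed by the computation.

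It follows that $\phi(V_{x,r})=\{w\in\bbV(\cO_m)\mid \rho_x(\zeta)\,\sigma_\zeta(w)=\zeta^{r}w\}$. This is a module of $\mu_m$-semi-invariants in $\bbV(\cO_m)=\bigoplus_{j=0}^{m-1}\varpi_m^j\bbV(\cO)$. To compute it, decompose $\bbV(\cO)$ into eigenspaces for the finite diagonalizable group $\rho_x(\mu_m)$. This is possible over $\cO$ because $m$ is prime to $p$ and $\mu_m\subset\cO$. The eigenspaces are exactly the lifts of the $\sV(x,\bar s)$, namely $\bigoplus_{\chi(x)\equiv s}$ of weight spaces after grouping $\mu_e$-orbits. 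The semi-invariants are then $\bigoplus_j\varpi_m^j\,\bbV(\cO)[\text{eigenvalue matching } r-j/m]$. Reducing mod $\varpi_m$ keeps only the $j=0$ term, whose reduction is $\sV(x,\bar r)$, so the image is as claimed. The kernel of the reduction consists of the elements of $\phi(V_{x,r})\cap\varpi_m\bbV(\cO_m)$. Pulling back, this gives $V_{x,r}\cap\phi^{-1}(\varpi_m\bbV(\cO_m))=V_{x,r+1/m}$. Since the jumps of $r\mapsto V_{x,r}$ occur only in $\frac1m\bbZ$, which is visible from the summand formula, we get $V_{x,r+1/m}=V_{x,r+}$.

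For the filtration statement, take $v\in V_{x,r}\cap V_{y,t}$. Enlarge $m$ so that $(y,t,m)$ is also a triple, and work in the decomposition by weights $\chi\in\sX_\rho$ over $\cO_m$. We have $v\in V_{y,t}$ if and only if each component $v_\chi$ has valuation at least $t-\chi(y)$. The reduction of $v$ has $\chi$-component equal to the leading coefficient of $\varpi^{\chi(x)-r}v_\chi$, and this coefficient is nonzero only if $\mathrm{val}(v_\chi)=r-\chi(x)$. Such a nonzero component therefore forces $r-\chi(x)\ge t-\chi(y)$, that is, $\chi(y-x)\ge t-r$. This proves $\iota(F^t_yV_x(r))\subset F^{t-r}_{y-x}\sV(x,\bar r)$. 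For the reverse inclusion, take $u$ in the target; it lies in a sum of $\sV_\chi$ with $\chi(y-x)\ge t-r$ and $\chi(x)\in\bar r+\bbZ$. Lift $u$ using the $j=0$ eigenspace component above, so that only these weights $\chi$ appear and each component has valuation exactly $r-\chi(x)\ge t-\chi(y)$. The lift then lies in $V_{y,t}\cap V_{x,r}$.

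I expect the main obstacle to be bookkeeping. One issue is the interaction between the $\mu_e$-twist, which permutes weights within a $\mu_e$-orbit and whose orbits share the value $\chi(x)$ because $x$ is $\mu_e$-invariant, and the Galois descent. The other is making sure that the lifting step produces elements of the $F$-rational space $\cV(F)$, rather than merely of $\bbV(F_m)$. I would first settle this by checking carefully that the eigenspace decomposition of $\bbV(\cO)$ under $\rho_x(\mu_m)$ is compatible with the grouping in \eqref{eq:V-lattice-decmoposition}, and that the $j=0$ component of the semi-invariants maps isomorphically onto $\sV(x,\bar r)$.
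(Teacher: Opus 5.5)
Your proposal is correct and is essentially the paper's argument. Both proofs are Galois descent for $\mu_m\cong\mathrm{Gal}(F_m/F)$ using $p\nmid m$: your explicit splitting of $\bbV(\cO_m)=\bigoplus_{j=0}^{m-1}\varpi_m^j\bbV(\cO)$ into $\rho_x(\mu_m)$-eigenspaces is a hands-on form of the paper's step that $H^1(\mu_m,V_{x,r+}(\cO_m))=0$ lets $\mu_m$-invariants commute with the quotient $V_{x,r}(\cO_m)/V_{x,r+}(\cO_m)\cong\sV$, and your filtration argument is the paper's weight computation plus the explicit lift for the reverse inclusion, which the paper leaves implicit.

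There are two harmless slips. First, with $\sigma_\zeta(\varpi_m)=\zeta\varpi_m$ the $\varpi_m^j$-component actually has eigencharacter matching $r+j/m$ rather than $r-j/m$; only $j=0$ matters, so nothing breaks. Second, when $\mu_e$ acts nontrivially on $\bbV$, the $\rho_x(\mu_m)$-eigenspaces are not sums of $\bbT$-weight spaces. So $\sV(x,\bar r)$ must be read as the eigenspace, and the lift taken inside the eigenspace of the $\mu_e$-stable summand $\bigoplus_{\chi(y-x)\ge t-r}\bbV_\chi$. This conflation is already present in the paper's two descriptions of $\sV(x,\bar r)$.
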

\begin{proof}
    Consider the following $\cO_m$-lattices in $\cV(F_m)=\bbV(F_m)$:
    \[V_{x,r}(\cO_m)\coloneqq\rho(\varpi^{-x})(\varpi^r\bbV(\cO_m))=\varpi_m^{mr}\rho(\varpi_m^{-mx})\bbV(\cO_m).\]
    \[V_{x,r+}(\cO_m)\coloneqq\varpi_m^{mr+1}\rho(\varpi_m^{-mx})\bbV(\cO_m).\]
    These lattices are stable under the semi-linear $\mu_m$-action on $\bbV(F_m)$ induced by the representation $\rho_0$ of $\mu_m$, c.f.\eqref{eq:rho-x}. By definition we have $\cV(F)=\bbV(F_m)^{\rho_0(\mu_m)}$ and hence
    \[V_{x,r}=V_{x,r}(\cO_m)^{\mu_m},\quad V_{x,r+}=V_{x,r+}(\cO_m)^{\mu_m}.\]
    The reduction map $\mathrm{Red}_{x,r}^V$ induces a $k$-linear isomorphism 
    \[V_{x,r}(\cO_m)/V_{x,r+}(\cO_m)\cong\sV\]
    which intertwines the $\mu_m$ action on the left hand side with the following $\mu_m$-action on the right hand side:
    \[\zeta\cdot w\coloneqq\zeta^{-r}\bar{\rho}_x(\zeta)(w),\quad\forall\zeta\in\mu_m,w\in\sV.\]
    After taking $\mu_m$-invariants, we see that $\mathrm{Red}_{x,r}^V$ induces the isomorphism
    \[V_{x,r}/V_{x,r+}=(V_{x,r}(\cO_m)/V_{x,r+}(\cO_m))^{\mu_m}\cong\sV(x,\Bar{r})\]
    where the first equality follows from the fact that $H^1(\mu_m,V_{x,r+})=0$ since $m$ is invertible in $\cO$.\par 
    It remains to show the last statement. We choose $m\in\bbN^{(p)}$ such that $(x,r,m)$ and $(y,t,m)$ are both Moy-Prasad triples. For any $v\in V_{x,r}$ we let $\tilde{v}\coloneqq\varpi^{-r}\rho(\varpi^x)v\in\bbV(\cO_m)$. So $\mathrm{Red}_{x,r}^V(v)$ equals to the image of $\tilde{v}$ in $\sV$. Under the decomposition \eqref{eq:V-first-decomposition}, we write 
    \[\tilde{v}=\sum_{\chi\in\sX_\rho}\tilde{v}_\chi,\quad\tilde{v}_\chi\in\bbV_\chi(\cO_m).\]
    By definition $v\in V_{y,t}\cap V_{x,r}$ if and only if $\varpi^{-t}\rho(\varpi^y)v\in\bbV(\cO_m)$. Using the decomposition above, we calculate:
    \[\varpi^{-t}\rho(\varpi^y)v=\varpi^{r-t}\rho(\varpi^{y-x})\tilde{v}=\sum_{\chi\in\sX_\rho}\varpi^{r-t+\chi(y-x)}\tilde{v}_\chi.\]
    Therefore the condition is equivalent to $\chi(y-x)\ge t-r$ for all $\chi\in\sX_\rho$ and the last statement follows.
\end{proof}

In the special case of adjoint representation we obtain a homomorphism of Lie algebras 
\begin{equation}\label{eq:red-Lie-algebra}
    \mathrm{Red}_{x,r}^\fg\colon\fg_{x,r}\to\Lie(\sG)=\bbg(k).
\end{equation}
that induces an isomorphism 
\[\iota_{x,r}^\fg\colon \fg_x(r)\coloneqq\fg_{x,r}/\fg_{x,r+}\cong\Lie(\sG)(x,\bar{r}).\]
In general for any $r,s\in\bbZ_{(p)}$, the source (resp. target) of $\mathrm{Red}_{x,s}^V$ is a representation of $\fg_{x,r}$ (resp. $\Lie(\sG)$) and we have the following commutative diagram:
\begin{equation}
\xymatrix{
\fg_{x,r}\otimes V_{x,s}\ar[r]\ar[d] & V_{x,r+s}\ar[d]\\
\Lie(\sG)(x,\bar{r})\otimes\sV(x,\bar{s})\ar[r] & \sV(x,\bar{r}+\bar{s})
}
\end{equation}
where the horizontal maps are induced by the differential of the representation $\rho$ and the vertical maps are induced by the natural reduction maps defined above.

\section{Generalized Witt-vector affine Springer fibers}\label{sec:GASF}
In this section we state and prove the main theorem on the existence of Hessenberg pavings for a class of generalized Witt-vector affine Springer fibers. Then in the end we specialize to the case of Lie algebras and deduce the consequence on Witt vector affine Springer fibers for tame equi-valued elements. \par 
We keep the notations and assumptions in \S\ref{subsec:generalized-MP-setup}.
\subsection{Statement of the main result}\label{subsec:state-main-result}
Recall that for each $y\in\cB(G)$, the Witt-vector affine flag variety $\mathrm{Fl}_y\coloneqq LG/L^+\cG_y$ is an ind-projective perfect ind-scheme (cf. \cites{BS-Witt,Zhu-mixed}). For each $v\in V, y\in\La^{(p),\mu_e}\subset\cB(G)$ and $t\in\bbZ_{(p)}$, the \emph{generalized Witt-vector affine Springer fiber} is the perfect sub-indscheme of $\mathrm{Fl}_y=LG/L^+\cG_y$ defined on functor of points by
\[\mathrm{Fl}_{y,v,t}\coloneqq\{g\in LG/L^+\cG_y\mid\rho(g)^{-1}v\in V_{y,t}\}.\]
We will study properties of $\mathrm{Fl}_{y,v,t}$ under the following assumption on $v$: 
\begin{itemize}
    \item[(*)]  There exists $x\in\La^{(p),\mu_e}$ and $s\in\bbZ_{(p)}$ with $s\ge t$ such that $v\in V_{x,s}$ and $\bar{v}\coloneqq\mathrm{Red}^V_{x,s}(v)\in\sV(x,\bar{s})$ is $\sG_x$-good in the sense of Definition \ref{def:good-vector}, where $\bar{s}\in\bbZ_{(p)}/\bbZ$ is the class of $s$.
\end{itemize}
Note that apriori $\bar{v}$ is only an element in $\sV$, but thanks to Lemma \ref{lem:red-V} we have $\bar{v}\in\sV(x,\bar{s})$. 
\begin{lem}\label{lem:v-good}
    For any $\bar{s}\in\bbZ_{(p)}/\bbZ$ and any element $\bar{v}\in\sV(x,\bar{s})$, if $\bar{v}$ is $\sG$-good in $\sV$, then it is $\sG_x$-good in $\sV(x,\bar{s})$.
\end{lem}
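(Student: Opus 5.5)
We argue by contraposition. Suppose $\bar{v}\in\sV(x,\bar{s})$ is not $\sG_x$-good in $\sV(x,\bar{s})$. Then there is a nonzero $\sG_x$-unstable vector $w^*\in\sV(x,\bar{s})^*$ that vanishes on $\Lie(\sG_x)\cdot\bar{v}$. The plan is to extend $w^*$ to a nonzero $\sG$-unstable vector $\tilde{w}^*\in\sV^*$ that vanishes on $\Lie(\sG)\cdot\bar{v}$, contradicting $\sG$-goodness.

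\textbf{Extending $w^*$ and checking it is unstable.} The $\bar{\rho}_x$-grading gives $\sV=\bigoplus_{\bar{r}}\sV(x,\bar{r})$, so $\sV^*=\bigoplus_{\bar{r}}\sV(x,\bar{r})^*$, and we may regard $\sV(x,\bar{s})^*$ as the summand of $\sV^*$ that kills all $\sV(x,\bar{r})$ with $\bar{r}\ne\bar{s}$. Let $\tilde{w}^*$ be $w^*$ extended by zero on the other summands. Since $\sA$ is a maximal torus of $\sG_x$, the instability of $w^*$ is witnessed by a cocharacter. After conjugating by $\sG_x$ (and replacing $\bar{v}$ by a conjugate, which preserves both goodness notions) we get $\la\in X_*(\sA)\subset X_*(\sT)$ with $\lim_{t\to0}\la(t)w^*=0$. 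Because $\la$ lands in $\sT$, it preserves each weight space $\sV_\chi^*$, hence each graded piece. The weights $-\chi$ occurring in $\tilde{w}^*$ are exactly those occurring in $w^*$, so $\la$ also contracts $\tilde{w}^*$ to $0$. Thus $\tilde{w}^*$ is $\sG$-unstable, and it is nonzero.

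\textbf{The annihilation condition, which is the main point.} We must show $\tilde{w}^*$ vanishes on $\Lie(\sG)\cdot\bar{v}$. Decompose $\Lie(\sG)=\bigoplus_{\bar{r}}\Lie(\sG)(x,\bar{r})$. The action map is compatible with the gradings, as in the commutative diagram after Lemma \ref{lem:red-V}, or directly because the action is $\bar{\rho}_x$-equivariant. Hence $\Lie(\sG)(x,\bar{r})\cdot\bar{v}\subset\sV(x,\bar{r}+\bar{s})$, and this is killed by $\tilde{w}^*$ unless $\bar{r}=0$. It remains to handle $\bar{r}=0$, that is, to show $\Lie(\sG)(x,0)\cdot\bar{v}\subset\ker w^*$. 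By hypothesis $w^*$ kills $\Lie(\sG_x)\cdot\bar{v}$, so I need $\Lie(\sG_x)=\Lie(\sG)(x,0)$.

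I expect this identification to be the only real obstacle. Since $\sG_x$ is the identity component of the fixed points of the finite group $\alpha_x(\hat{\bbZ}(1)^{(p)})$, which has order prime to $p$ and so is linearly reductive, the fixed-point scheme is smooth. Its Lie algebra is therefore the invariants $\Lie(\sG)^{\alpha_x}$, and these are exactly the degree-$0$ part $\Lie(\sG)(x,0)$. Alternatively, we can apply Lemma \ref{lem:red-V} to the adjoint representation with $r=0$ and use $\fg_x/\fg_{x+}\cong\Lie(\sG_x)$ from \S\ref{sec:parahoric-group-scheme}. With this identification the contradiction follows, and the lemma is proved.
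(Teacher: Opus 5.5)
Your proposal is correct and follows essentially the same argument as the paper. You extend the destabilizing functional by zero across the other graded pieces of $\sV^*$, note that it stays $\sG$-unstable because cocharacters of $\sG_x$ (equivalently of $\sA\subset\sT$) are cocharacters of $\sG$, and use $\Lie(\sG)(x,\bar{r})\cdot\bar{v}\subset\sV(x,\bar{r}+\bar{s})$ to reduce the annihilation condition to the degree-zero piece. The one place you go beyond the paper is your justification of the needed inclusion $\Lie(\sG)(x,0)\subset\Lie(\sG_x)$: you derive it from smoothness of the fixed-point scheme of the prime-to-$p$ group $\alpha_x(\hat{\bbZ}(1)^{(p)})$, whereas the paper uses this identification without comment.
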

\begin{proof}
    Suppose $\Bar{v}$ is not $\sG_x$-good. Then there exists a nonzero $\sG_x$-unstable vector $v^*\in\sV(x,\bar{s})^*=\sV^*(x,-\bar{s})$ such that $v^*$ vanishes on 
    \[\Lie(\sG_x)\cdot\Bar{v}=\Lie(G)(x,0)\cdot\Bar{v}.\]
    We view $v^*$ also as an element in $\sV^*$ by requiring that it vanishes on $\sV(x,t')$ for any $t'\ne\bar{s}$. Then $v^*$ is also $\sG$-unstable since $\sG_x$ and $\sG$ share a common maximal torus $\sA$. We have
    \[\Lie(\sG)\cdot\Bar{v}=\bigoplus_{t'\in\bbZ_{(p)}/\bbZ}\Lie(\sG)(x,t')\cdot\Bar{v}\]
    where each summand satisfies $\Lie(\sG)(x,t')\cdot\Bar{v}\subset\sV(x,t'+\bar{s})$. By definition $v^*$ annihilates $\sV(x,t'+\bar{s})$ for any $t'\ne0$ and hence $v^*$ vanishes on $\Lie(\sG)\cdot\Bar{v}$. This contradicts the assumption that $\Bar{v}$ is $\sG$-good. Therefore $\Bar{v}$ is a $\sG_x$-good vector in $\sV(x,\bar{s})$.
\end{proof}

\begin{thm}\label{thm:main}
    Under the assumption (*), the generalized affine Springer fiber $\mathrm{Fl}_{y,v,t}$ admits a paving by perfections of iterated affine space bundles over smooth Hessenberg varieties. In particular, the cohomology of $\mathrm{Fl}_{y,v,t}$ is pure. 
\end{thm}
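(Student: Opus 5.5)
We follow the strategy of \cite{GKM} and transport each step to the Witt-vector setting. Write $P_y\coloneqq\bfP_y$ and $P_x\coloneqq\bfP_x$, and fix an Iwahori subgroup $\bfI\subset P_x\cap P_y$ whose alcove has closure containing both $x$ and $y$ (after conjugating, using the apartment for $T$). The paving comes from the decomposition of $\mathrm{Fl}_y$ into orbits of $L^+\cG_x$. By the Bruhat decomposition for the Witt-vector partial affine flag varieties \cites{BS-Witt,Zhu-mixed}, we have $LG=\bigsqcup_{w}L^+\cG_x\,\dot w\,L^+\cG_y$, where $w$ runs over $W_x\backslash\widetilde W/W_y$. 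Hence $\mathrm{Fl}_y$ is stratified by the locally closed perfect subschemes $C_w\coloneqq L^+\cG_x\dot wL^+\cG_y/L^+\cG_y$. Each $C_w$ is the perfection of a vector bundle over the partial flag variety $\sG_x/\sP_w$, where $\sP_w$ is the parabolic subgroup determined by the point $w y$ seen from $x$. Since $v\in V_{x,s}$ is stabilized up to $V_{x,s}$ by $P_x$, the stratification restricts to $\mathrm{Fl}_{y,v,t}$. It therefore suffices to show that each intersection $\mathrm{Fl}_{y,v,t}\cap C_w$ is the perfection of an iterated affine space bundle over a smooth Hessenberg variety in $\sG_x/\sP_w$. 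Replacing $(y,t)$ by $(wy,t)$ via translation by $\dot w$, we reduce to studying points of the form $g=p$ with $p\in P_x$.

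For $p\in P_x$, the condition $\rho(p)^{-1}v\in V_{y,t}$ is analyzed by successive approximation along the filtration $V_{x,s}\supset V_{x,s+}\supset\cdots$. First, the leading term: by Lemma \ref{lem:red-V}, the reduction $\rho(p)^{-1}v\bmod V_{x,s+}$ lies in $F_y^tV_x(s)$ if and only if $\bar\rho(\bar p)^{-1}\bar v\in F^{t-s}_{y-x}\sV(x,\bar s)$, where $\bar p=\mathrm{Red}_x(p)\in\sG_x(k)$. The subspace $F^{t-s}_{y-x}\sV(x,\bar s)$ is stable under the parabolic $\sP_{y-x}$ of $\sG_x$ attached to $y-x$. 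So the image of $\mathrm{Fl}_{y,v,t}\cap C_w$ in $\sG_x/\sP$ is the Hessenberg variety
\[\mathrm{Hess}=\{\bar g\sP\mid \bar g^{-1}\bar v\in F^{t-s}_{y-x}\sV(x,\bar s)\}.\]
Its smoothness, of the expected dimension, follows as in \cite{GKM}. The goodness of $\bar v$ gives surjectivity of the relevant differential $\Lie(\sG_x)\to\sV(x,\bar s)/F^{t-s}_{y-x}$ at each point, since a nonzero functional killing the image would be $\sG_x$-unstable (it lies in a positive filtration piece of the dual for $y-x$) and would annihilate $\Lie(\sG_x)\cdot\bar v$.

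Second, the higher terms: we show that the fibers of $\mathrm{Fl}_{y,v,t}\cap C_w\to\mathrm{Hess}$ are iterated affine space bundles. We filter the unipotent part $P_{x+}$ by $P_{x,r}$ with $r\in\frac1m\bbZ_{>0}$. At step $r$, the correction $\exp(X)$ with $X\in\fg_x(r)$ changes $\rho(p)^{-1}v$ modulo $V_{x,s+r+}$ by $-X\cdot\bar v\in\sV(x,\bar s+\bar r)$, using the commutative diagram relating $\fg_{x,r}\otimes V_{x,s}\to V_{x,r+s}$ to the reductions. The condition to lie in $V_{y,t}$ at level $s+r$ is therefore an affine-linear condition on $X$, namely that it lands in $F_{y-x}^{t-s-r}$ modulo a constant. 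We need the linear map
\[\fg_x(r)/F^{-r'}_{y-x}\fg_x(r)\;\xrightarrow{\cdot\bar v}\;\sV(x,\bar s+\bar r)/F^{t-s-r}_{y-x}\]
(with the appropriate index shift, at the point $\bar g$) to be surjective. This again follows from goodness of $\bar v$ for $\sG$: by Lemma \ref{lem:v-good}-type arguments with the grading, a functional on the cokernel is $\sG$-unstable and kills $\Lie(\sG)\cdot\bar v$, hence vanishes. With constant-rank surjectivity, each step produces a torsor under a vector bundle of constant rank over the previous stage, that is, an affine space bundle. Only finitely many steps matter, because once $s+r$ is large enough that $V_{x,s+r}\subset V_{y,t}$ (which holds by boundedness, using $s\ge t$), the condition becomes vacuous. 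The remaining $P_{x,r}$ directions then contribute trivial affine factors, which are quotiented by $L^+\cG_y$.

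The main obstacle will be the surjectivity and constant-rank statement at every level $r$, uniformly in $\bar g\in\mathrm{Hess}$. This requires a careful identification of $\rho(p)^{-1}$ modulo successive lattices with the graded linear algebra, which needs the Moy--Prasad isomorphism \eqref{eq:MP-isom} to be compatible with $\rho$ in mixed characteristic. Since there is no exponential available, we will use truncated group elements via $\xi_{x,r}$ and the diagram above, and we will work with the perfection of the relevant jet schemes. Purity then follows because the paving pieces have pure cohomology: smooth projective Hessenberg varieties carry affine space bundles on top, and we apply the long exact sequences for the paving.
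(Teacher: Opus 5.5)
Your outline follows the paper's proof step by step. You stratify $\mathrm{Fl}_y$ by $\bfP_x$-orbits and translate by $\dot w$ to reduce to $S\subset\bfP_x/\bfP_{x,y}$. The leading-term condition gives the Hessenberg variety in $\sG_x/\sP_{x,y}$. You then climb the filtration $\bfP_{x,r}$, and each step is a torsor under $\ker L_{\bar v}$ once $L_{\bar v}$ is surjective.

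The one substantive divergence is where you get surjectivity at the higher steps. You derive it from $\sG$-goodness of $\bar v$. But (*) only provides $\sG_x$-goodness of $\bar v$ in $\sV(x,\bar s)$, and Lemma \ref{lem:v-good} goes in the opposite direction. As written, you have therefore proved the theorem under the stronger hypothesis that $\bar v$ is $\sG$-good, not under (*). You should say so explicitly.

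Your choice is nonetheless the right one, and at this point the paper's argument is thinner than yours. It simply asserts, citing GKM, that goodness of $\bar v$ in $\sV(x,\bar s)$ makes $L_{\bar v}$ surjective.

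When $\bar r=0$ this is fine, and that includes the Hessenberg step. There $\Lie(\sG)(x,0)=\Lie(\sG_x)$, and a cokernel functional is $\sG_x$-unstable via $\Ad(\bar g)(y-x)$. For $\bar r\neq 0$, however, the domain $\Lie(\sG)(x,\bar r)$ is a different graded piece, and $\sG_x$-goodness gives no control over it.

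Here is an example. Take $\sG=\SL_2$ with $p$ odd, $x$ the barycenter of the alcove (so $\sG_x=\sA$), $V=\fg$ and $\bar s=0$. Every $\bar v\in\ft=\sV(x,0)$, including $\bar v=0$, is $\sA$-good, because $\sA$ acts trivially on $\ft^*$. Yet $L_{\bar v}=0$ on $\Lie(\sG)(x,\tfrac12)$. Meanwhile the target $\sV(x,\tfrac12)/F^{t-s-1/2}_{y-x}\sV(x,\tfrac12)$ is nonzero as soon as $\alpha(y-x)<t-s-\tfrac12$.

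Your degree argument is exactly what is needed. Extend the cokernel functional by zero on the other graded pieces; it then kills all of $\Lie(\sG)\cdot\bar v$, and it is $\sG$-unstable because $t-s-r<0$. Moreover, $\sG$-goodness is precisely what Corollary \ref{cor:ASF} verifies, via Proposition \ref{prop:rs-good} and the regular semisimplicity of $\bar\gamma$. The clean fix is to strengthen (*) to $\sG$-goodness, which costs nothing for the application.

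One minor slip: in general there is no alcove whose closure contains both $x$ and $y$. You never use that Iwahori subgroup, so drop it.
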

Recall that a perfect indscheme $X$ admits a paving by certain family $\Omega$ of perfect schemes if there is an exhaustive filtration $X_0\subset X_1\subset X_2\dotsm$ by closed perfect subschemes such that each successive difference $X_i\backslash X_{i-1}$ is a disjoint union of perfect schemes in the family $\Omega$. The family $\Omega$ relevant to us consists of perfections of iterated affine space bundles over Hessenberg varieties, i.e. certain schemes that admits a surjective smooth morphism to a Hessenberg variety (to be defined in \S\ref{sec:Hess-var}) whose fibers are isomorphic to an affine space $\bbA^n$ for some integer $n\ge0$. \par 
Next we start to prove Theorem \ref{thm:main}, and it will be finished in \S\ref{sec:affine-space-bundle}. 

\subsection{Stratification}
We first analyze the intersection of $\mathrm{Fl}_{y,v,t}$ with the $L^+\cG_x$-orbits on $\mathrm{Fl}_y$. To simplify notation, we write $\bfP_x\coloneqq L^+\cG_x$, $\bfP_y\coloneqq L^+\cG_y$ and $\bfP_{x,y}\coloneqq\bfP_x\cap\bfP_y$. Each $\bfP_x$-orbit on $\mathrm{Fl}_y$ contains a point of the form $\dot{w}\bfP_y$ where $\dot{w}$ represents an element $w\in\widetilde{W}$ in the Iwahori-Weyl group. The intersection with the $\bfP_x$-orbit through $\dot{w}\bfP_y$ is 
\begin{align*}
    S_w&=\{g\in\bfP_x\dot{w}\bfP_y/\bfP_y\mid \rho(g)^{-1}v\in V_{y,t}\}\\
    &\cong\{h\in\bfP_x/\bfP_x\cap \dot{w}\bfP_y\dot{w}^{-1}\mid \rho(h)^{-1}v\in \rho(\dot{w})V_{y,t}\}.
\end{align*}
where the isomorphism sends $g$ to $g\dot{w}^{-1}$. Then we see that $S_w$ is isomorphic to the intersection of the $\bfP_x$ orbit through identity on $\mathrm{Fl}_{w(y)}$ with $\mathrm{Fl}_{w(y),v,t}$. Therefore we are reduced to study the spaces
\begin{equation}\label{eq:S}
    \begin{split}
        S&\coloneqq\{g\in\bfP_x/\bfP_{x,y}|\rho(g)^{-1}v\in V_{y,t}\}\\
    &=\{g\in\bfP_x/\bfP_{x,y}|v\in \rho(g)V_{y,t}\cap V_{x,s}\}.
    \end{split}
\end{equation}

Here the second equality holds since $v\in V_{x,s}$ by assumption.\par 
To determine the structure of $S$, we introduce the following spaces for each $r\in\bbZ_{(p)},r\ge0$:
\[\Tilde{S}_r\coloneqq\{g\in\bfP_x/\bfP_{x,y} | v\in \rho(g)V_{y,t}\cap V_{x,s}+V_{x,s+r}\},\]
\[\Tilde{S}_{r+}\coloneqq\{g\in\bfP_x/\bfP_{x,y} | v\in \rho(g)V_{y,t}\cap V_{x,s}+V_{x,s+r+}\}.\]
By \eqref{eq:MP-subgp} the Moy-Prasad subgroup $\bfP_{x,r}$ acts trivially on the quotient 
\[V_x(s+r)=V_{x,s+r}/V_{x,s+r+}.\] 
Then we see that $\bfP_{x,r}$ (resp. $\bfP_{x,r+}$) acts on the spaces $\tilde{S}_r$ (resp. $\tilde{S}_{r+}$) by left multiplication and we form the coarse quotients:
\[S_r\coloneqq\bfP_{x,r}\backslash\Tilde{S}_r,\quad S_{r+}\coloneqq\bfP_{x,r+}\backslash\Tilde{S}_{r+}\]
that fits in the following commutative diagram in which the horizontal arrows are closed embeddings:
\begin{equation}\label{eq:affine-fibration}
    \xymatrix{
    S_{r+}\ar[r]\ar[d]_{\pi_r} & \bfP_{x,r+}\backslash\bfP_{x}/\bfP_{x,y}\ar[d]^{\Tilde{\pi}_r}\\
    S_r\ar[r] & \bfP_{x,r}\backslash\bfP_x/\bfP_{x,y}.}
\end{equation}
Here we recall that $\bfP_{x,r}$ is a normal subgroup of $\bfP_x$ and the coarse double quotient $\bfP_{x,r}\backslash\bfP_x/\bfP_{x,y}$ is identified with the quotient of the finite dimensional group $\bfP_x/\bfP_{x,r}$ by its subgroup $\bfP_{x,y}/\bfP_{x,y}\cap\bfP_{x,r}$ (which acts by right multiplication). In particular $S_0=\Spec(k)$ is a single point. On the other hand, when $r$ is sufficiently large so that $\bfP_{x,r}\subset\bfP_{x,y}$ and $V_{x,s+r}\subset V_{y,t}$, we have 
\[S_r=S_{r+}=\Tilde{S}_r=\tilde{S}_{r+}=S,\quad\quad r\gg0.\]
We need to understand the structure of the map $\pi_r$. Let $\sP_{x,y}\coloneqq\bfP_{x,y}/\bfP_{x,y}\cap\bfP_{x+}$. Then $\sP_{x,y}$ is a parabolic subgroup of $\sG_x=\bfP_x/\bfP_{x+}$ and the reduction map \eqref{eq:red-Px} induces an isomorphism
\[\bfP_{x+}\backslash\bfP_{x}/\bfP_{x,y}\cong\sG_x/\sP_{x,y}.\]
For each $g\in\bfP_x$, let $\bar{g}\in\sG_x(k)$ be the image of $g$ under \eqref{eq:Px-red-quotient}. Due to Lemma \ref{lem:red-V}, the isomorphism $\iota_{x,r+s}^V$ from \eqref{eq:reduction-eq} induces an isomorphism
\begin{align*}
    \frac{gV_{y,t}\cap V_{x,s}+V_{x,s+r}}{gV_{y,t}\cap V_{x,s}+V_{x,s+r+}}&=\frac{V_{x,s+r}}{gV_{y,t}\cap V_{x,s+r}+V_{x,s+r+}}\\
    &\cong\sV(x,\bar{s}+\bar{r})/\bar{g}F_{y-x}^{t-s-r}\sV(x,\bar{s}+\bar{r}).    
\end{align*}
If $g\in\bfP_x$ represents a point in $S_r$, then we have $v\in gV_{y,t}\cap V_{x,s}+V_{x,s+r}$ and we let
\begin{equation}\label{eq:vg}
    v_{\bar{g}}\in\sV(x,\bar{s}+\bar{r})/\bar{g}F_{y-x}^{t-s-r}\sV(x,\bar{s}+\bar{r})
\end{equation} 
denote the image of $v$ under the isomorphism above.

\subsection{Hessenberg varieties}\label{sec:Hess-var}
We first study the case $r=0$. Then $S_0$ is a point and we need to describe $S_{0+}$. 
Also we note that since $r=0$, the element $v_{\bar{g}}$ in \eqref{eq:vg} equals to the image of $\bar{v}\in\sV(x,\bar{s}+\bar{r})$. Then we deduce that
\begin{align*}
    S_{0+}&=\{g\in\bfP_{x+}\backslash\bfP_x/\bfP_{x,y}\mid \rho(g)^{-1}v\in V_{y,t}\cap V_{x,s}+V_{x,s+}\}\\
    &\cong\{\sG_x/\sP_{x,y}\mid\rho(g)^{-1}\bar{v}\in F_{y-x}^{t-s}\sV(x,\bar{s})\}.
\end{align*}
Due to the assumption that $\Bar{v}$ is $\sG_x$-good in $\sV(x,\bar{s})$ and $t-s\le0$, we know from \cite{GKM}*{\S1.5} that this is a smooth projective variety over $k$, which is called a \emph{Hessenberg variety} in \emph{loc. cit.} Next we define some natural vector bundles on $S_{0+}$ that will be closely related to the spaces $S_r$.\par
For any $\bar{r}\in\bbZ_{(p)}/\bbZ$ and $t\in\bbZ_{(p)}$, we note that $\sV(x,\bar{r})$ is a $\sG_x$-module and the subspace $F_{y-x}^{t}\sV(x,\bar{r})$ is a $\sP_{x,y}$-submodule. Then we have the following vector bundle
\[\tilde{F}_{y-x}^{t}\sV(x,\bar{r})\coloneqq \sG_x\times^{\sP_{x,y}}F_{y-x}^{t}\sV(x,\bar{r})\]
on the partial flag variety $\sG_x/\sP_{x,y}$ whose fiber at a point $g\sP_{x,y}$ is the subspace 
\[g F_{y-x}^{t}\sV(x,\bar{r})\subset\sV(x,\bar{r}).\]
So by construction it is a sub-bundle of the constant vector bundle:
\[\tilde\sV(x,\bar{r})\coloneqq\sV(x,\bar{r})\times\sG_x/\sP_{x,y}.\]

In particular, when $\bar{r}=\bar{s}$ (and replace $t$ by $t-s$) the element $\bar{v}\in\sV(x,\bar{s})$ defines a global section of the quotient vector bundle $\tilde\sV(x,\bar{s})/\tilde{F}_{y-x}^{t-s}\sV(x,\bar{s})$ and its zero loci (i.e. intersection with the zero section) is precisely the Hessenberg variety $S_{0+}$.\par  
More generally, for any scheme $S'$ over $\sG_x/\sP_{x,y}$ and any vector bundle $\cV$ on $\sG_x/\sP_{x,y}$, we let $\cV_{S'}$ denote the base change of $\cV$ to $S'$. Then for any $r\ge0$, the assignment $g\mapsto v_{\bar{g}}$ from \eqref{eq:vg} defines a global section of the following vector bundle on $S_r$:
\[\tilde\sV(x,\bar{s}+\bar{r})_{S_r}/\tilde{F}_{y-x}^{t-s-r}\sV(x,\bar{s}+\bar{r})_{S_r}\]
and similar statement holds for $S_{r+}$. 

\subsection{Iterated affine space bundle}\label{sec:affine-space-bundle}
When $r>0$, we will show that the natural map $\pi_r:S_{r+}\to S_r$ is the perfection of an affine space bundle. For any $g\in\bfP_x$ representing a point in $S_r$, let us determine the fiber $\pi_r^{-1}(g)$.\par 
Let $\sP_x(r)_y$ be the image of $\bfP_y\cap\bfP_{x,r}$ in $\sG_x(r)=\bfP_{x,r}/\bfP_{x,r+}$. Then under the isomorphism $\xi_{x,r}$ in \eqref{eq:MP-isom} we have 
\[\xi_{x,r}(\sP_x(r)_y)=F_y^0\fg_x(r)\]
in the notation of \S\ref{subsec:good-elements}. Moreover for any $u\in V_{x,s}$ and $h\in\bfP_{x,r}$ we have 
\[h^{-1}u\in u-\xi_{x,r}(h)u+V_{x,s+r+}.\]
Consequently the fibers of $\Tilde{\pi}_r$ and $\pi_r$ in the diagram above can be described as
\begin{align*}
    \Tilde{\pi}_r^{-1}(g)&=\sG_x(r)/\Ad(\bar{g})\sP_x(r)_y\\
    &\xrightarrow[\sim]{\eqref{eq:MP-isom}}\fg_x(r)/\ad(\bar{g})\cdot F_y^0\fg_x(r)\\
    &\xrightarrow[\sim]{\eqref{eq:reduction-eq}}\Lie(\sG)(x,\bar{r})/\ad(\bar{g})F^{-r}_{y-x}\Lie(\sG)(x,\bar{r})
\end{align*}
and
\begin{equation}
    \begin{split}
        \pi_r^{-1}(g)&=\{h\in\sG_x(r)/\mathrm{Ad}(g)\sP_x(r)_y | h^{-1}v\in gV_{y,t}\cap V_{x,s}+V_{x,s+r+}\}\\
        &=\{h\in\sG_x(r)/\mathrm{Ad}(g)\sP_x(r)_y | v-\xi_{x,r}(h)v\in gV_{y,t}\cap V_{x,s}+V_{x,s+r+}\}\\
        &\xrightarrow[\sim]{\eqref{eq:reduction-eq}\circ\eqref{eq:MP-isom}}
        \{Y\in\Lie(\sG)(x,\bar{r})/\ad(\bar{g})F^{-r}_{y-x}\Lie(\sG)(x,\bar{r})\mid v_{\bar g}=Y\bar{v}\}.
    \end{split}
\end{equation}
Consider the $k$-linear map
\[L_{\bar{v},\bar{g}}\colon\Lie(\sG)(x,\bar{r})/\ad(\bar{g})F^{-r}_{y-x}\Lie(\sG)(x,\bar{r})\to\sV(x,\bar{s}+\bar{r})/\bar{g}F_{y-x}^{t-s-r}\sV(x,\bar{s}+\bar{r})\]
defined by sending $Y$ to (the image of) $Y\cdot\bar{v}$. Then the discussions above show that we have an isomorphism 
\[\pi_r^{-1}(g)\cong L_{\bar{v},\bar{g}}^{-1}(v_{\bar g}).\] 
Using the notations in the previous subsection, as $g$ varies the maps $L_{\bar{v},\bar{g}}$ constitute a homomorphism of vector bundles on $S_r$:
\[L_{\bar{v}}\colon\widetilde{\Lie(\sG)}(x,\bar{r})_{S_r}/\tilde{F}^{-r}_{y-x}\Lie(\sG)(x,\bar{r})_{S_r}\to\tilde{\sV}(x,\bar{s}+\bar{r})_{S_r}/\tilde{F}_{y-x}^{t-s-r}\sV(x,\bar{s}+\bar{r})_{S_r}\]
and then $S_{r+}$ is identified with the inverse image of the global section $(g\mapsto v_{\bar{g}})$ under $L_{\bar{v}}$. \par
By assumption we have $t-s-r<0$, then the goodness of $\Bar{v}$ in $\sV(x,\bar{s})$ implies that $L_{\bar{v}}$ is surjective (by \cite{GKM}*{\S1.5}). Therefore $\pi_r$ is surjective and is isomorphic to a torsor under the perfection of the vector bundle $\ker(L_{\bar{v}})$ over $S_r$. 
In conclusion, combining the discussions above we see that the space $S$ from \eqref{eq:S}, which equals to $S_r$ for $r$ sufficiently large, is the perfection of an iterated affine space bundle over the smooth Hessenberg variety $S_{0+}$. This finishes the proof of Theorem \ref{thm:main}.

\subsection{The case of Lie algebras}\label{subsec:Lie-alg-case}
We retain the notations from \S\ref{sec:group-local-field}. Assume that $\sG=\bbG_k$ is both root-smooth and coroot-unramified. In particular, the regular semisimple elements in $\Lie(\sG)$ form a nonempty open subset. \par 
For any character $\chi\in X^*(\bbT)$, we let $d\chi\in\bbt^\vee$ denote its differential, which is viewed as a regular function on $\bbt$. Let $\Phi\subset X^*(\bbT)$ be the set of \emph{all} roots of $\bbT$ in $\bbg$. 
\begin{defn}
    An element $\ga\in\bbt(\overline{F})$ is \emph{equi-valued} if there exists $r\in\bbQ$ such that
    \begin{itemize}
        \item $\mathrm{val}(d\alpha (\ga))=r$ for any root $\alpha\in\Phi$, and
        \item $\mathrm{val}(d\chi (\ga))\ge r$ for any character $\chi\in X^*(\bbT)$.
    \end{itemize}
    The number $r$ is called the \emph{depth} of $\ga$. More generally, an element $\ga\in\fg(F)$ is \emph{equi-valued (of depth $r$)} if it is $\bbG(\overline{F})$-conjugate to an equi-valued element in $\bbt(\overline{F})$ (of depth $r$).
\end{defn}
Note that the first condition in the definition implies that $d\alpha(\ga)\ne0$ for all root $\alpha$. Hence an equi-valued element in $\fg(F)$ is automatically regular semisimple. Moreover, an equi-valued element is bounded (in the sense of \cite{Chi-Witt}*{Definition 4.6}) if and only if its depth is non-negative.
\begin{defn}
    A regular semisimple element $\ga\in\fg(F)$ is \emph{tame} if its centralizer $G_\ga$ splits over a tamely ramified extension of $F$, or equivalently if there exists $m\in\bbN^{(p)}$ such that $\ga$ is $\bbG(F_m)$-conjugate to an element in $\bbt(F_m)$. 
\end{defn}
Let $\ga\in\fg(F)$ be a tame regular semisimple element. Let $m\in\bbN^{(p)}$ be the minimal positive integer such that $e|m$ and $G_\ga$ splits over $F_m$. There exists an embedding of the extended Bruhat-Tits buildings $\widetilde{\cB}(G_\ga)\into\widetilde{\cB}(G)$. Since $G_\ga$ splits over $F_m$, we can choose a point $x\in\widetilde{\cB}(G_\ga)$ that becomes hyperspecial in $\widetilde{\cB}(G(F_m))$. Let $S'\subset G$ be a maximal $F$-split torus whose corresponding apartment in $\widetilde{\cB}(G)$ contains the point $x$ and the special point $0$ fixed in \S\ref{subsec:special-integral-model}. Then we have $x-0=\frac{1}{m}\mu$ for some $\mu\in X_*(S')$ and for any $r\in\frac{1}{m}\bbZ$ the following equality holds: 
\[\Ad(\mu(\varpi_m))(\fg(F_m)_{x,r})=\varpi_m^{rm}\bbg(\cO_m).\]
Recall that the reduction map \eqref{eq:red-Lie-algebra} is given by the composition
\[\mathrm{Red}_{x,r}^\fg\colon\fg_{x,r}\subset\fg(F_m)_{x,r}\xrightarrow{\varpi_m^{-rm}\Ad(\mu(\varpi_m))}\bbg(\cO_m)\to\bbg(k)\]
Since $G_\ga$ is a tamely ramified maximal $F$-torus of $G$, according to \cite{Adler}*{1.9.1} we get
\[\fg_\ga(F)\cap\fg_{x,r}=\fg_\ga(F)_{x,r}=\{x\in\fg_\ga(F)\mid \val(d\chi(x))\ge r,\forall\chi\in X^*(G_{\ga,\overline{F}})\}.\]
Now let us assume moreover that $\ga$ is equi-valued of depth $r$. The equality above implies that $\ga\in\fg_{x,r}$ and we deduce that the element $\varpi_m^{-rm}\Ad(\mu(\varpi_m))(\ga)$ is equi-valued of depth $0$. Consequently $\bar{\ga}\coloneqq\mathrm{Red}_{x,r}^\fg(\ga)$ is a regular semisimple element in $\bbg(k)$ and hence $G$-good by Proposition \ref{prop:rs-good}. Then the condition (*) in \S\ref{subsec:state-main-result} is satisfied by Lemma \ref{lem:v-good} and we deduce the following consequence from Theorem \ref{thm:main}:
\begin{cor}\label{cor:ASF}
    Let $\ga\in\fg(F)$ be a tame equi-valued element of depth $r\ge0$. For any parahoric subgroup $\bfP\subset G(F)$, the Witt-vector affine Springer fiber
    \[X_{\bfP,\ga}\coloneqq\{g\in G(F)/\bfP\mid\mathrm{Ad}(g)^{-1}(\ga)\in\Lie(\bfP)\}\]
    admits a paving by perfections of iterated affine space bundles over smooth Hessenberg varieties. In particular, the cohomology of $X_{\bfP,\ga}$ is pure. 
\end{cor}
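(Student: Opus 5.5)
We plan to deduce Corollary \ref{cor:ASF} directly from Theorem \ref{thm:main}, applied to the adjoint representation $\bbV=\bbg$. In that case $\cV=\Lie(\cG_0)$ and $V_{y,t}=\fg_{y,t}$.

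\textbf{Step 1 (reduction to $\mathrm{Fl}_{y,\ga,0}$).} Every parahoric subgroup $\bfP\subset G(F)$ is $G(F)$-conjugate to a standard one $\bfP_y$, with $y$ in the closure of the fixed alcove. In particular we may take $y\in\La^{(p),\mu_e}$, since the vertices of the alcove have rational coordinates with denominators prime to $p$ in the tame case. Conjugating by $h\in G(F)$ gives an isomorphism $X_{h\bfP_y h^{-1},\ga}\cong X_{\bfP_y,\ga}$, via $g\mapsto gh$. Moreover $\Lie(\bfP_y)=\fg_{y,0}$, so
\[X_{\bfP_y,\ga}=\mathrm{Fl}_{y,\ga,0}.\]
Strictly speaking, $X_{\bfP,\ga}$ should be read as the perfect ind-scheme given by its functor of points on $LG/L^+\cG_y$, and the identification above holds at that level.

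\textbf{Step 2 (verifying (*)).} This is exactly the discussion preceding the corollary. Since $\ga$ is tame, we choose $m$ and a point $x$ in the image of $\widetilde{\cB}(G_\ga)$ that is hyperspecial over $F_m$. After conjugating by $G(F)$, which again only changes $X_{\bfP,\ga}$ by an isomorphism, we may assume $x\in\La^{(p),\mu_e}$ lies in our fixed apartment. Using the apartment through $x$ and $0$ as in the text, Adler's description of $\fg_\ga(F)\cap\fg_{x,r}$ gives $\ga\in\fg_{x,r}$. Moreover $\bar\ga=\mathrm{Red}^\fg_{x,r}(\ga)$ is the reduction of a depth-$0$ equi-valued element of $\bbt(\cO_m)$ after conjugation, so every $d\alpha(\bar\ga)$ is nonzero. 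Hence $\bar\ga$ is regular semisimple in $\Lie(\sG)$. By Proposition \ref{prop:rs-good}, which uses our standing root-smooth and coroot-unramified assumption, $\bar\ga$ is $\sG$-good. By Lemma \ref{lem:v-good} it is then $\sG_x$-good in $\Lie(\sG)(x,\bar r)$. Taking $s=r\ge0=t$, assumption (*) holds.

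\textbf{Step 3 (conclusion).} Theorem \ref{thm:main} now gives the paving of $\mathrm{Fl}_{y,\ga,0}$, hence of $X_{\bfP,\ga}$, together with purity of its cohomology.

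\textbf{Expected obstacle.} The delicate point is the compatibility of normalizations in Step 2. The point $x$ comes from the building of $G_\ga$ and may a priori lie in an apartment different from the one attached to $T$. The reduction maps $\mathrm{Red}_{x,r}$ are defined using $\varpi^x\in\bbT$, that is, for $x$ in the standard apartment. We would handle this by conjugating $\ga$ by an element of $G(F)$ that moves the apartment of $S'$ to the standard apartment and fixes the special point $0$. One must then check that the rational cocharacter $\mu/m$ corresponds to an element of $\La^{(p),\mu_e}$, and that reduction commutes with this conjugation up to the identification \eqref{eq:Px-red-quotient}. Once this bookkeeping is in place, the regular semisimplicity of $\bar\ga$ follows from the depth computation: the integrality condition $\val(d\chi(\ga))\ge r$ places $\varpi_m^{-rm}\Ad(\mu(\varpi_m))\ga$ in $\bbt(\cO_m)$, and the root condition makes every root value a unit.
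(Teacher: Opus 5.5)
Your proposal follows the paper's argument: you verify (*) for $\bar\ga=\mathrm{Red}^{\fg}_{x,r}(\ga)$ via Adler's description of $\fg_\ga(F)\cap\fg_{x,r}$, the regular semisimplicity of the depth-$0$ rescaled element, Proposition \ref{prop:rs-good} and Lemma \ref{lem:v-good}, and then apply Theorem \ref{thm:main} with $V_{y,t}=\fg_{y,0}=\Lie(\bfP_y)$ and $s=r\ge 0=t$; your Step 1 just makes explicit a reduction that the paper leaves implicit. One caution on Step 1: alcove vertices need not have denominators prime to $p$ (for example $\omega_1^\vee$ for $\SL_n$ with $p\mid n$), so your stated justification for writing an arbitrary parahoric as $\bfP_y$ with $y\in\La^{(p),\mu_e}$ is incorrect, although the paper tacitly makes the same restriction, so this is not a departure from its proof.
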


\end{document}